\documentclass[11pt,oneside]{amsart}

\usepackage[margin=1in]{geometry}
\usepackage{amsmath,amssymb,amsthm}
\usepackage[table]{xcolor}
\usepackage{hyperref}

\hypersetup{
  colorlinks=true,
  linkcolor=blue,
  citecolor=red,
  urlcolor=blue
}

\newtheorem{theorem}{Theorem}[section]
\newtheorem{lemma}[theorem]{Lemma}
\newtheorem{corollary}[theorem]{Corollary}
\newtheorem{remark}[theorem]{Remark}

\DeclareMathOperator{\dg}{deg}
\newcommand{\cc}{\mathcal}

\begin{document}

\title{Extending Symmetric Layer-Rainbow Latin Cubes}
\author{Amin Bahmanian}
\address{Department of Mathematics, Illinois State University, Normal, IL USA 61790-4520}
\subjclass[2020]{Primary 05B15, 05C70; Secondary 05C65, 05C15}
\keywords{symmetric layer-rainbow Latin cube, embedding, extension, one-factorization, hypergraph, fair detachment, symmetric factorization, $\operatorname{PSL}(2,7)$}

\begin{abstract}
An $n\times n\times n$ array on $n^2$ symbols is a layer-rainbow Latin cube if every layer contains every symbol exactly once.  We call it symmetric if $L_{ij\ell}=L_{j\ell i}=L_{\ell ij}$ for distinct $i,j,\ell$ and $L_{iij}=L_{jji}$, $L_{iji}=L_{jij}$, $L_{ijj}=L_{jii}$ for distinct $i,j$.  We determine exactly when a symmetric layer-rainbow Latin cube of order $m$ embeds in one of order $n$, giving a three-dimensional analogue of Cruse's embedding theorem.  Call a positive integer admissible if it is congruent to $0$ or $2$ modulo $3$, with $1$ admissible and $3$ excluded.  For $n>m$, an embedding exists if and only if $m,n$ are admissible, $(m,n)\ne(2,5)$, and
\[
\begin{cases}
 n\geq2m,&n-m\not\equiv1\pmod3,\\[1mm]
 \displaystyle n\geq m+\frac{\sqrt{48m^2+1}-1}{6},&n-m\equiv1\pmod3.
\end{cases}
\]
Via the equivalent one-factorization problem for a non-uniform hypergraph, fair detachment reduces the proof to an exact integer allocation.  We also determine the structure forced at both sharp boundaries and obtain infinitely many equality cases.  At order eight, we construct a symmetric layer-rainbow Latin cube admitting the natural diagonal action of $\operatorname{PSL}(2,7)$, whose induced action on the $64$ symbols has orbit sizes $1,7,28,28$.
\end{abstract}

\maketitle

\section{Introduction}

We let $L$ be an $n\times n\times n$ array.  A \emph{layer} of $L$ is obtained by fixing one coordinate.  We call $L$ a \emph{layer-rainbow Latin cube} if it is filled with $n^2$ symbols and every layer contains every symbol exactly once.  Latin cubes and hypercubes were introduced by Kishen in 1942~\cite{Kishen1942}; in his later terminology, the objects just defined are precisely \emph{Latin cubes of the second order}~\cite{Kishen1949}.  Fisher subsequently studied related orthogonal cube constructions in experimental design~\cite{Fisher1945}; the statistical notion associated with Fisher uses $n$ symbols, each occurring $n$ times in every layer~\cite{HedayatSloaneStufken1999}.  Curiously, neither this notion nor Kishen's second-order notion coincides with a common modern combinatorial usage of \emph{Latin cube}: an $n\times n\times n$ array on $n$ symbols in which every axis-parallel line contains every symbol exactly once~\cite{CasselgrenMarkstromPham2019}.  Since several inequivalent notions have been called Latin cubes, we use the term \emph{layer-rainbow} to distinguish the present one.

Kishen's cubes were motivated by the design of experiments and admit a natural orthogonal-array interpretation with unequal column sizes~\cite{HedayatSloaneStufken1999}.  Indeed, in the $n^3$ rows $(i,j,k,L_{ijk})$, every ordered pair in two coordinate columns occurs $n$ times, while every coordinate-symbol pair occurs exactly once.  We will not need this interpretation here; the proof is most transparent in the equivalent hypergraph language developed in Section~2.

The symmetric version considered here was studied in~\cite{MR4665304}.  Following that paper, we call $L$ \emph{symmetric} if, for distinct $i,j,\ell$,
\[
 L_{ij\ell}=L_{j\ell i}=L_{\ell ij},
\]
and, for distinct $i,j$,
\[
 L_{iij}=L_{jji},\qquad L_{iji}=L_{jij},\qquad L_{ijj}=L_{jii}.
\]

As in a symmetric Latin square, where row $i$ determines column $i$ and conversely, the $i$th layer in any coordinate direction determines the $i$th layers in the other two: for distinct $i,j,k$, $L_{jki}=L_{ijk}$ and $L_{jik}=L_{ikj}$, with the repeated-coordinate cases covered by the identities above.

It was proved in~\cite[Theorem~1.1]{MR4665304} that a symmetric layer-rainbow Latin cube of order $n$ exists exactly when $n$ is congruent to $0$ or $2$ modulo $3$, with the exceptions that order $1$ exists and order $3$ does not.  We call such an order \emph{admissible}.

We study the corresponding embedding problem.  A cube of order $m$ \emph{embeds} in a cube of order $n$ if, after identifying its coordinate set with an $m$-subset of the new coordinate set and relabeling symbols if necessary, all entries on the old $m\times m\times m$ subcube are unchanged.  Throughout we assume $n>m$.  Without symmetry, a layer-rainbow cube of order $m$ embeds in one of order $n$ if and only if $n\geq2m$~\cite[Theorem~1.1]{BahmanianLayerRainbowEmbedding}; thus the classical Latin-square embedding threshold survives exactly in that three-dimensional setting.  The natural two-dimensional precedent for the symmetric problem is Cruse's theorem on embedding incomplete symmetric Latin squares~\cite{Cruse1974}.  From the hypergraph point of view, the problem also belongs to the theory of extending one-factorizations of complete designs: Cameron~\cite{Cameron1976} posed the general extension problem, Baranyai and Brouwer~\cite{BaranyaiBrouwer1977} proved the conjectured criterion for uniformities $2$ and $3$, and H\"aggkvist and Hellgren~\cite{HaggkvistHellgren1993} settled the general case.  Related rank-three embedding results appear in~\cite{Bahmanian2024Cruse}.

Symmetry changes the hypergraph itself.  Nonsymmetric layer-rainbow cubes correspond to one-factorizations of a complete tripartite $3$-uniform hypergraph, whereas symmetric layer-rainbow cubes correspond to one-factorizations of a hypergraph whose singleton, pair, and triple edges occur with multiplicities $1,3,2$, respectively.  These three edge sizes must therefore be handled together, and an additional obstruction can appear.

The symmetric problem turns out to retain the threshold $n\geq2m$ in most cases, but symmetry creates one additional obstruction.  We set $q=n-m$.  When $q\not\equiv1\pmod3$, the condition is simply $q\geq m$.  When $q\equiv1\pmod3$, the limited supply of pair edges forces the sharper inequality
\[
 4m^2\leq q(3q+1).
\]
There is also one exceptional pair, $(m,n)=(2,5)$.  Accordingly, we call $(m,n)$ \emph{feasible} if $m$ and $n$ are admissible, $(m,n)\ne(2,5)$, and
\[
\begin{cases}
 q\geq m,&q\not\equiv1\pmod3,\\[1mm]
 4m^2\leq q(3q+1),&q\equiv1\pmod3.
\end{cases}
\]
Equivalently, the second inequality is
\[
 n\geq m+\frac{\sqrt{48m^2+1}-1}{6}.
\]

Our main result is the following.

\begin{theorem}\label{thm:symmetric-cube-embedding}
A symmetric layer-rainbow Latin cube of order $m$ embeds in one of order $n$ if and only if $(m,n)$ is feasible.
\end{theorem}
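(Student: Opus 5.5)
Throughout, write $X$ for the old coordinate set ($|X|=m$), $Y$ for the new one ($|Y|=q=n-m$), and $V=X\cup Y$. I would first pass to hypergraph language. A symmetric cube of order $n$ is the same as a one-factorization, into $n^2$ one-factors, of the hypergraph $\lambda K_n$ on $V$ with the following edges.
\begin{itemize}
\item Each singleton $\{i\}$ has multiplicity $1$, corresponding to the cell $(i,i,i)$.
\item Each pair $\{i,j\}$ has multiplicity $3$, corresponding to the three symmetry classes $\{(i,i,j),(j,j,i)\}$, $\{(i,j,i),(j,i,j)\}$ and $\{(i,j,j),(j,i,i)\}$.
\item Each triple $\{i,j,\ell\}$ has multiplicity $2$, corresponding to the two cyclic classes of the six cells $(i,j,\ell)$ and its permutations.
\end{itemize}
One checks that each symmetry class meets every layer through each of its vertices exactly once in each direction. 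Hence the set of classes carrying a fixed symbol is exactly a partition of $V$ into edges, that is, a one-factor. Under this dictionary, an embedding of an order-$m$ cube means the following. The $m^2$ given one-factors of $\lambda K_m$ on $X$ are each completed to one-factors of $\lambda K_n$ using only edges inside $Y$, because the vertices of $X$ are already covered. The remaining $n^2-m^2=q(q+2m)$ one-factors then use up every mixed edge together with the unused edges inside $Y$.

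\emph{Necessity.} Admissibility of $m$ and $n$ comes from \cite[Theorem~1.1]{MR4665304}.

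For $q\ge m$, fix $x\in X$. Each of the $q(q+2m)$ new one-factors contains exactly one edge through $x$, and that edge must be mixed. The old one-factors contain no mixed edges, so every mixed edge lies in a new one-factor. Comparing the number of mixed edges through $x$ with the number of new factors gives one identity. The decisive constraint comes from counting how many vertices of $Y$ the new factors must spend covering $X$. Each mixed triple or pair through $X$ consumes vertices of $Y$ in a prescribed ratio, and the edges inside $Y$ are limited. The aim is to show that a double count of $Y$-incidences over the new factors is impossible when $q<m$, mirroring the argument of \cite{BahmanianLayerRainbowEmbedding}.

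For $q\equiv1\pmod3$, each of the $m^2$ old factors restricted to $Y$ partitions $Y$ into edges of sizes $1,2,3$. Such a partition of a set of size $\equiv1\pmod 3$ needs either a singleton or at least two pairs. Only $q$ singletons exist inside $Y$, so at least $m^2-q$ of these factors use two or more pairs each. There are only $3\binom q2$ pairs inside $Y$, which gives
\[
2(m^2-q)\le \tfrac32 q(q-1),
\]
equivalently $4m^2\le q(3q+1)$.

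The pair $(2,5)$ I would rule out by a direct case analysis of the few possible completions.

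\emph{Sufficiency.} I would use amalgamation and fair detachment. Contract $Y$ to a single vertex $\alpha$, so that edges meeting $Y$ become edges containing $\alpha$ with repeated incidences. It then suffices to construct a factorization, or coloring with $n^2$ colors, of the amalgamated hypergraph satisfying the following conditions.
\begin{itemize}
\item Each old color class is the given factor on $X$ plus $\alpha$ with multiplicity $q$.
\item Every vertex of $X$ has degree $1$ in each color.
\item $\alpha$ has degree exactly $q$ in each color.
\item The numbers of singleton, pair and triple edges at $\alpha$ of each type, within each color, are allocated so that $Y$-sizes split correctly.
\end{itemize}
The fair detachment theorem, assumed available from the paper's hypergraph section, then splits $\alpha$ into $q$ vertices, with each color degree and each edge-type multiplicity shared as evenly as possible. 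Fairness forces each color class to become a one-factor and each edge of $\lambda K_n$ to appear with its correct multiplicity.

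So the proof reduces to an exact integer allocation. For every color and every edge type ($\{\alpha\}$, $\{\alpha,\alpha\}$, $\{\alpha,\alpha,\alpha\}$, $\{x,\alpha\}$, $\{x,\alpha,\alpha\}$, $\{x,y,\alpha\}$), one must choose nonnegative multiplicities with the correct totals and row sums. I expect this step to be the main obstacle, especially for $q\equiv1\pmod3$, where the pair supply inside $Y$ is tight. My first attempt would be a greedy allocation. Assign singletons of $Y$ to $q$ old colors and two pairs to each remaining old color, which is possible by the inequality. Then fill the new colors by a balanced distribution proved via a flow/Hall argument. Small cases near the boundary would be handled separately by explicit constructions.
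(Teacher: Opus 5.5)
Your necessity argument is essentially the paper's. The bound $4m^2\le q(3q+1)$ for $q\equiv1\pmod3$ is derived exactly as there. For $q\ge m$ you only announce a double count. It does work: all mixed edges lie in new factors and carry $mq(m+2q)$ incidences at $Y$, while the new factors have total $Y$-degree $q^2(q+2m)$. Counting $Y$-incidences of the old factors, $m^2q\le q+6\binom q2+6\binom q3=q^3$, is quicker. The exclusion of $(2,5)$ is only promised, though it is short. Degree counting forces the $21$ new colors into three types in numbers $3,12,6$, and the three of the first type use up all singleton edges inside $Y$. Each of the four old colors then needs a triple inside $Y$, and only two exist.

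The genuine gap is sufficiency, which is the real content of the theorem. You set up amalgamation and detachment but leave the allocation to a greedy choice, ``a flow/Hall argument'', and explicit constructions near the boundary. None of these addresses the actual difficulty, and there are three concrete problems.

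\emph{The constraints are arithmetic, not network constraints.} For each color, the numbers $s_i,p_i,t_i$ of singleton, pair and triple edges inside $Y$ must satisfy $s_i+2p_i+3t_i=r_i$, where $r_i$ is the $Y$-degree left by the mixed edges. The totals $q$, $3\binom q2$, $2\binom q3$ are fixed, and the mixed edges of a new color obey equations with coefficients $1,2,1$ at $X$ and $1,1,2$ at $Y$. The obstruction is residues mod $3$ and parity, not a supply--demand imbalance, so Hall-type or flow-integrality arguments do not apply. The paper writes each $r_i=2a_i+3b_i$ and trades two triples for three pairs (Lemma~\ref{lem:completion}), which works when $\sum_i a_i\le3\binom q2$ and at most $2\binom q3$ of the $b_i$ are odd. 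It then chooses the number of edges with two old and one new vertex in each new color within explicit intervals, so that almost all $b_i$ are even (Lemma~\ref{lem:partial-coloring}).

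\emph{The tight cases are not finitely many.} The equation $4m^2=q(3q+1)$ has infinitely many admissible solutions (Corollary~\ref{cor:pell-boundary}), and $q=m$ is feasible for every $m\equiv0\pmod3$. When $4m^2=q(3q+1)$, your greedy old-color assignment consumes every singleton and every pair inside $Y$. So each new color's mixed edges must already leave $r_i\equiv0\pmod3$. When $q=m$, new colors can use no edge inside $Y$ at all. These congruences must be built into the mixed-edge allocation from the start.

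\emph{Small $m$ needs its own argument.} The pair $(2,5)$ satisfies both inequalities yet fails, so no uniform allocation covers small $m$. The paper treats $m=2$ separately, via a one-factorization of $\mathcal H_n$ with all singletons in one factor (Lemma~\ref{lem:singleton-factor}). It also needs a special table for $(m,q)=(5,7)$.

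A smaller point: keeping the vertices of $X$ separate makes things harder than necessary. After deleting the copy of $\mathcal H_m$ you can contract $X$ as well, which turns the problem into pure counting on two vertices. Your version still requires decomposing the $2q$-fold complete graph on $X$ into the matchings carried by the new colors.
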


\medskip
\noindent Table~\ref{tab:symembed26} gives the smallest nontrivial embedding.  The six matrices are the first-coordinate layers of the order-$6$ cube.  The shaded $2\times2$ blocks in the first two layers form the embedded order-$2$ cube.

\begin{table}[htbp]
\centering
\caption{A symmetric layer-rainbow Latin cube of order six containing one of order two.  The shaded cells form the embedded cube.}
\label{tab:symembed26}
\setlength{\tabcolsep}{2.2pt}
\renewcommand{\arraystretch}{0.92}
\scriptsize

\begin{minipage}[t]{0.30\textwidth}
\centering
\textit{Layer 1}\\[2pt]
\begin{tabular}{@{}|c|c|c|c|c|c|@{}}
\hline
\cellcolor{gray!30} A & \cellcolor{gray!30} B & E & H & K & N \\ \hline
\cellcolor{gray!30} C & \cellcolor{gray!30} D & Q & S & U & W \\ \hline
F & R & G & Y & 0 & 2 \\ \hline
I & T & Z & J & 4 & 6 \\ \hline
L & V & 1 & 5 & M & 8 \\ \hline
O & X & 3 & 7 & 9 & P \\ \hline
\end{tabular}
\end{minipage}
\hfill
\begin{minipage}[t]{0.30\textwidth}
\centering
\textit{Layer 2}\\[2pt]
\begin{tabular}{@{}|c|c|c|c|c|c|@{}}
\hline
\cellcolor{gray!30} D & \cellcolor{gray!30} C & R & T & V & X \\ \hline
\cellcolor{gray!30} B & \cellcolor{gray!30} A & H & L & E & F \\ \hline
Q & I & K & 8 & 6 & 4 \\ \hline
S & M & 9 & N & 2 & 0 \\ \hline
U & O & 7 & 3 & P & Y \\ \hline
W & G & 5 & 1 & Z & J \\ \hline
\end{tabular}
\end{minipage}
\hfill
\begin{minipage}[t]{0.30\textwidth}
\centering
\textit{Layer 3}\\[2pt]
\begin{tabular}{@{}|c|c|c|c|c|c|@{}}
\hline
G & Q & F & Z & 1 & 3 \\ \hline
R & K & I & 9 & 7 & 5 \\ \hline
E & H & A & B & C & D \\ \hline
Y & 8 & O & P & W & U \\ \hline
0 & 6 & J & X & N & S \\ \hline
2 & 4 & L & V & T & M \\ \hline
\end{tabular}
\end{minipage}

\vspace{0.35em}

\begin{minipage}[t]{0.30\textwidth}
\centering
\textit{Layer 4}\\[2pt]
\begin{tabular}{@{}|c|c|c|c|c|c|@{}}
\hline
J & S & Y & I & 5 & 7 \\ \hline
T & N & 8 & M & 3 & 1 \\ \hline
Z & 9 & P & O & X & V \\ \hline
H & L & B & A & D & C \\ \hline
4 & 2 & W & F & G & Q \\ \hline
6 & 0 & U & E & R & K \\ \hline
\end{tabular}
\end{minipage}
\hfill
\begin{minipage}[t]{0.30\textwidth}
\centering
\textit{Layer 5}\\[2pt]
\begin{tabular}{@{}|c|c|c|c|c|c|@{}}
\hline
M & U & 0 & 4 & L & 9 \\ \hline
V & P & 6 & 2 & O & Z \\ \hline
1 & 7 & N & W & J & T \\ \hline
5 & 3 & X & G & F & R \\ \hline
K & E & C & D & A & B \\ \hline
8 & Y & S & Q & H & I \\ \hline
\end{tabular}
\end{minipage}
\hfill
\begin{minipage}[t]{0.30\textwidth}
\centering
\textit{Layer 6}\\[2pt]
\begin{tabular}{@{}|c|c|c|c|c|c|@{}}
\hline
P & W & 2 & 6 & 8 & O \\ \hline
X & J & 4 & 0 & Y & G \\ \hline
3 & 5 & M & U & S & L \\ \hline
7 & 1 & V & K & Q & E \\ \hline
9 & Z & T & R & I & H \\ \hline
N & F & D & C & B & A \\ \hline
\end{tabular}
\end{minipage}
\end{table}

When $n-m\not\equiv1\pmod3$, symmetry leaves the unsymmetric threshold $n\geq2m$ unchanged.  When $n-m\equiv1\pmod3$, the sharp threshold is asymptotically $(1+2/\sqrt3)m$.  At the sharp bounds, the embedding is highly constrained: in the exceptional congruence case the old symbols occupy every cell of the new subcube with at least two equal coordinates, while at $n=2m$ they fill the entire opposite $m\times m\times m$ subcube, with the new symbols balanced between the two types of mixed regions.  We identify all old vertices with one vertex and all new vertices with another, solve the resulting finite integer allocation problem, and then apply fair detachment.  The detachment approach used here was inspired by work of Hilton and Rodger on graph decompositions and by Ferencak and Hilton on amalgamated triple systems~\cite{HiltonRodger1986Hamilton,FerencakHilton2002}.

At order eight, the profiles from Lemma~\ref{lem:singleton-factor} admit a much more rigid realization under the natural action of $G=\operatorname{PSL}(2,7)$.  In Section~\ref{sec:psl27} we prove that there is a unique $G$-invariant one-factorization of $K_8^2\cup K_8^3$, necessarily with factor type $2+3+3$, and extend it to a $G$-invariant one-factorization of $\cc H_8$ with orbit sizes $1,7,28,28$.

\section{Hypergraph formulation and the embedding theorem}

Throughout, hypergraphs may have repeated edges, and an edge may contain a vertex more than once.  The degree $\dg_{\cc F}(v)$ of a vertex $v$ is the number of incidences of $v$ with the edges of $\cc F$, counting repetitions.  If $\cc F$ is edge-colored, then $\cc F(j)$ denotes the spanning subhypergraph whose edges are those of color $j$.  A \emph{one-factor} is a spanning $1$-regular subhypergraph, and a \emph{one-factorization} is an edge-coloring in which every color class is a one-factor.  The \emph{profile} of a color class is $(s,p,t)$, where $s$, $p$, and $t$ are its numbers of singleton, pair, and triple edges, respectively.  If $Y$ is a multiset of vertices of $\cc F$, we let $(Y)_{\cc F}$ denote the number of edges equal to $Y$, counting multiplicity.  When the underlying hypergraph is clear from context, we use $(Y)$.

For positive integers $n$ and $r$, let $K_n^r$ denote the complete $r$-uniform hypergraph on $n$ vertices.  We let
\[
 \cc H_n=K_n^1\cup3K_n^2\cup2K_n^3.
\]
Every vertex of $\cc H_n$ has degree $n^2$, so every one-factorization of $\cc H_n$ has $n^2$ colors.

The following correspondence is the hypergraph formulation of symmetric layer-rainbow Latin cubes from~\cite[Lemma~2.1]{MR4665304}.

\begin{lemma}\label{symcubequi1fac}
There is a one-to-one correspondence between symmetric layer-rainbow Latin cubes of order $n$ and one-factorizations of $\cc H_n$.
\end{lemma}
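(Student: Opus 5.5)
The plan is to write down the correspondence explicitly and check that it is a bijection. Take the vertex set of $\cc H_n$ to be the coordinate set $[n]$ and the colors to be the $n^2$ symbols. The cells of an $n\times n\times n$ array are ordered triples $(i,j,\ell)$. Symmetry partitions these triples into classes, and we associate to each class an edge of $\cc H_n$. Each diagonal cell $(i,i,i)$ forms its own class and is associated with the singleton edge $\{i\}$. For distinct $i,j$, the six cells with coordinate multiset $\{i,i,j\}$ or $\{i,j,j\}$ split under the symmetry rules into three classes:
\[
\{(i,i,j),(j,j,i)\},\qquad \{(i,j,i),(j,i,j)\},\qquad \{(i,j,j),(j,i,i)\}.
\]
Each class is associated with one of the three copies of the pair edge $\{i,j\}$. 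For distinct $i,j,\ell$, the six permutations of $(i,j,\ell)$ split into the two cyclic orbits $\{(i,j,\ell),(j,\ell,i),(\ell,i,j)\}$ and $\{(j,i,\ell),(i,\ell,j),(\ell,j,i)\}$. Each orbit is associated with one of the two copies of the triple edge $\{i,j,\ell\}$. Counting classes gives $n+3\binom n2+2\binom n3$, which matches the number of edges of $\cc H_n$. We fix this labeling of the parallel edges once and for all, for instance by ordering $i<j$ (respectively $i<j<\ell$).

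Given a symmetric layer-rainbow cube $L$, the symmetry conditions say exactly that $L$ is constant on each class. Coloring each edge by the common symbol of its class therefore gives a well-defined edge-coloring of $\cc H_n$. Conversely, any edge-coloring of $\cc H_n$ defines an array by filling every cell of a class with the color of the corresponding edge, and this array is automatically symmetric. The two maps are clearly inverse to each other, so it only remains to show that $L$ is layer-rainbow if and only if every color class is a one-factor.

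The main step is the following incidence count: for each vertex $v$ and each edge $e$, the number of cells of the first-coordinate layer $\{(v,\cdot,\cdot)\}$ lying in the class of $e$ equals the multiplicity of $v$ in $e$. That multiplicity is $1$ if $v\in e$ and $0$ otherwise. The singleton class $\{(v,v,v)\}$ contributes exactly one cell to layer $v$. Each of the three pair classes for $\{v,j\}$ contains exactly one cell with first coordinate $v$, as can be read off from the list above. Each triple orbit contains exactly one cell with first coordinate $v$, since a cyclic orbit places each element first exactly once. Summing over all edges, layer $v$ receives symbol $c$ exactly $\dg_{\cc F(c)}(v)$ times. Hence layer $v$ in the first direction contains every symbol exactly once if and only if every color class is $1$-regular at $v$. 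Since the layer has $n^2$ cells and there are $n^2$ colors, "each symbol at most once" and "each symbol exactly once" are equivalent here.

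For layers in the other two directions there are two routes. One is to repeat the same count, which works because each class meets every coordinate-$v$ layer in the same way; for example, the class $\{(i,i,j),(j,j,i)\}$ has exactly one cell with third coordinate $v$ when $v\in\{i,j\}$. The other is to use the remark from the introduction that, under symmetry, the $i$th layers in all three directions determine one another. The only obstacle in the whole proof is careful bookkeeping: checking that the three pair classes each meet every layer of $v$ exactly once in each direction, so that the multiplicities $1,3,2$ match exactly. This is a finite check on the six cells involving $\{i,j\}$.
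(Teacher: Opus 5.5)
Your proposal is correct. The paper does not reprove this lemma but cites it from \cite{MR4665304}, and your explicit dictionary is the standard argument and the same correspondence the paper uses implicitly in Section~\ref{sec:psl27}. That dictionary sends diagonal cells to singletons, the three two-cell classes such as $\{(i,i,j),(j,j,i)\}$ to the three pair copies, and the two cyclic orbits to the two triple copies, so that the number of times a symbol occurs in any layer at $v$ equals the degree of $v$ in that color class. Section~\ref{sec:psl27} uses exactly this when it identifies the first copy of $\{\infty,0\}$ with the cells $(\infty,\infty,0),(0,0,\infty)$, and the two copies of a triple with its two cyclic orientations.
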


We say that a one-factorization of $\cc H_m$ embeds in one of $\cc H_n$ if, after identifying $V(\cc H_m)$ with an $m$-subset of $V(\cc H_n)$ and relabeling colors if necessary, the latter restricts to the given one-factorization on $\cc H_m$.

We use the following detachment lemma~\cite{BahmanianConnectedFair}.  For an integer $x$ and a real number $y$, we let $x\approx y$ mean $x\in\{\lfloor y\rfloor,\lceil y\rceil\}$.

\begin{lemma}\label{splittinglemma}
Suppose $\cc F$ is an edge-colored hypergraph on $\{\alpha,\beta\}$, $a,b$ are positive integers, and no edge contains $\alpha$ more than $a$ times or $\beta$ more than $b$ times.  Then $\alpha$ and $\beta$ can be split into disjoint sets $A,B$ of sizes $a,b$ to obtain a hypergraph $\cc F'$ in which no edge contains a vertex more than once.  For a color $j$, $x\in A$, and $y\in B$,
\[
 \dg_{\cc F'(j)}(x)\approx\frac{\dg_{\cc F(j)}(\alpha)}a,
 \qquad
 \dg_{\cc F'(j)}(y)\approx\frac{\dg_{\cc F(j)}(\beta)}b.
\]
For $U\subseteq A$ and $W\subseteq B$ with $U\cup W\ne\varnothing$,
\[
 (U\cup W)_{\cc F'}\approx
 \frac{(\alpha^{|U|}\beta^{|W|})_{\cc F}}
 {\binom a{|U|}\binom b{|W|}}.
\]
\end{lemma}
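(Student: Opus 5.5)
The plan is to prove the lemma by splitting off one vertex at a time, and to justify each single split by Baranyai-type rounding of a fractional detachment over two laminar families. Set up the induction as follows. At a general stage the hypergraph lives on $\{x_1,\dots,x_r,\alpha,y_1,\dots,y_s,\beta\}$. Here $\alpha$ still stands for $a-r$ vertices of $A$ and $\beta$ for $b-s$ vertices of $B$. The inductive invariant is that every quantity in the conclusion is already within $\{\lfloor\cdot\rfloor,\lceil\cdot\rceil\}$ of its target when computed ``in expectation'', meaning relative to a uniform random completion of the detachment. One step detaches a single vertex $x$ from $\alpha$, leaving $\alpha$ to represent $a-r-1$ vertices, with the symmetric step for $\beta$. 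The hypothesis that no edge contains $\alpha$ more than $a$ times or $\beta$ more than $b$ times is exactly what makes the final stage, where $\alpha$ and $\beta$ each represent a single vertex, produce edges with no repeated vertex.

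For a single step, consider each edge $e$ containing $\alpha$ with multiplicity $k\geq1$ (the other $\beta$- and $x_i$-entries fixed). Splitting $x$ off either leaves $e$ alone or replaces one copy of $\alpha$ in $e$ by $x$. The uniform fractional choice does the latter with weight $k/(a-r)$. Group the edges into classes indexed by (color $j$, current multiset type of $e$ apart from $\alpha$, multiplicity $k$). Now choose, within each class, an integer number of edges to receive $x$. The constraints we want to keep ``$\approx$'' to the fractional values are of two kinds:
\begin{itemize}
\item (i) for each color $j$, the number of edges of color $j$ that receive $x$ (this controls $\dg_{\cc F'(j)}(x)$, and by subtraction the color-$j$ degree of the remaining $\alpha$);
\item (ii) for each type, summed over colors, the number of edges of that type that receive $x$ (this controls the edge counts $(U\cup W)$).
\end{itemize}
The family of sets in (i) together with their refinements into classes is laminar, and so is the family in (ii). Baranyai's rounding lemma, proved via the integral flow theorem on the network source $\to$ colors $\to$ classes $\leftarrow$ types $\leftarrow$ sink, then yields an integral choice that simultaneously rounds every class, color total and type total to floor or ceiling of its fractional value.

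Finally, check that per-step roundings do not accumulate. Because each step's target is the exact conditional expectation of the final quantity given the current state, the identity $\lfloor\lfloor y\rfloor/c\rfloor=\lfloor y/c\rfloor$, and its analogue for ceilings, shows the final values stay within one of $\dg_{\cc F(j)}(\alpha)/a$ and $(\alpha^{|U|}\beta^{|W|})/\binom a{|U|}\binom b{|W|}$. To make this precise, I would strengthen the invariant: after $r$ vertices are split, every already split vertex $x_i$ and every edge count involving the $x_i$ satisfies the final ``$\approx$''. In addition, each unsplit quantity equals a floor/ceiling of the appropriate binomial proportion of the original count.

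The main obstacle I expect is this bookkeeping for edge counts involving several split vertices at once, i.e.\ $|U|\geq2$. A count $(U\cup W)$ is determined across several steps, and its target $\binom a{|U|}^{-1}$-share must be shown to be respected by the nested roundings. My first attempt is to index the type classes in (ii) by the full set of already split vertices contained in the edge. That keeps the family laminar, since splitting $x$ refines each type class into ``contains $x$'' and ``does not''. The binomial identity $\binom{a-r}{k}\cdot\frac{k}{a-r}=\binom{a-r-1}{k-1}$ then makes the targets consistent from step to step.
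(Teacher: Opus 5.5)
The paper does not prove this lemma; it imports it from \cite{BahmanianConnectedFair}, so your proposal would be a self-contained substitute. Your architecture is sound. Detaching one vertex at a time and rounding, at each step, the color-by-type matrix with entries $c\,k/(a-r)$ is legitimate: entries, color totals and type totals can all be made floors or ceilings (two laminar families, integral flows). The color totals are exactly $\dg_j(\alpha)/(a-r)$. When $k=a-r$ the weight is $1$, so the rounding is forced; this keeps $k\le a-r$ and leaves no repeated vertex at the end.

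The gap is in the bookkeeping you flagged. Your strengthened invariant, that each unsplit quantity is a floor or ceiling of its binomial proportion, is false and is not preserved by your rounding. The identity $\lfloor\lfloor y\rfloor/c\rfloor=\lfloor y/c\rfloor$ does not help, because the factors are $k/(a-r)$ and $(a-r-k)/(a-r)$, not reciprocals of integers. For example, let $a=4$ and let $\cc F$ consist of three $\alpha^2$-edges of three different colors, so every target (pair multiplicity and color degree) is $\tfrac12$. Step~1 may give $x_1$ the edges of colors $1$ and $2$. At step~2 the entries are $\tfrac13,\tfrac13$ (type $x_1\alpha$) and $\tfrac23$ (type $\alpha^2$), and rounding them to $1,0,0$ satisfies all your constraints. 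Afterwards the type $x_2\alpha$ (not containing $x_1$) has count $0$, although its proportion $\binom21\cdot\tfrac12=1$ is an integer. Likewise, color $3$ still has degree $2$ at $\alpha$, although $(a-2)\cdot\tfrac12=1$. Yet every completion of this detachment is fair, so the invariant you propose is neither true nor needed.

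What survives is the interval form, which is how your first paragraph reads if the expected final value is required to lie in $[\lfloor t\rfloor,\lceil t\rceil]$. Take a type with $k'$ copies of $\alpha$ and $l'$ of $\beta$, where $\alpha,\beta$ currently stand for $a',b'$ vertices, coming from $\alpha^k\beta^l$. Put $t=(\alpha^k\beta^l)_{\cc F}/\binom ak\binom bl$ and $M=\binom{a'}{k'}\binom{b'}{l'}$. Maintain $M\lfloor t\rfloor\le C\le M\lceil t\rceil$ for its count $C$, and $a'\lfloor d_j\rfloor\le\dg_j(\alpha)\le a'\lceil d_j\rceil$ with $d_j=\dg_{\cc F(j)}(\alpha)/a$. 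Splitting $x$ off multiplies these bounds by $k'/a'$ (resp.\ $1/a'$). This yields the \emph{integer} multipliers $\binom{a'-1}{k'-1}\binom{b'}{l'}$ (resp.\ $1$), and $\binom{a'-1}{k'}\binom{b'}{l'}$ (resp.\ $a'-1$) for the complementary part; this is where your binomial identity is genuinely used. Since the new endpoints are integers, any floor or ceiling of the fractional target stays between them. At the end $M=1$, which is exactly the statement. With this invariant in place of the strengthened one and the floor-of-floor argument, your proof goes through.
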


We shall also use the immediate one-vertex form: if no edge of an edge-colored hypergraph on one vertex $\alpha$ contains $\alpha$ more than $a$ times, then $\alpha$ may be split fairly into $a$ vertices.  This follows by adding an isolated dummy vertex and taking the second split size to be $1$ in Lemma~\ref{splittinglemma}.

\medskip
\noindent\textit{The two-vertex reduction.}
We let a one-factorization of $\cc H_m$ be given and set $q=n-m$.  We let $(\lambda_1,\lambda_2,\lambda_3)=(1,3,2)$, and we let $\cc G$ be the hypergraph on $\{\alpha,\beta\}$ in which, for integers $r\geq0$ and $s\geq1$ with $r+s\leq3$,
\[
 (\alpha^r\beta^s)_{\cc G}=\lambda_{r+s}\binom mr\binom qs,
\]
and no other edges occur.  Equivalently,
\[
\begin{array}{lll}
(\beta)=q,&(\beta^2)=3\binom q2,&(\beta^3)=2\binom q3,\\[1mm]
(\alpha\beta)=3mq,&(\alpha^2\beta)=qm(m-1),&(\alpha\beta^2)=mq(q-1).
\end{array}
\]
We call the $m^2$ colors of the given one-factorization \emph{old} and the remaining $n^2-m^2$ colors \emph{new}.  For a color $i$ and a vertex $v$, we let $\dg_i(v)$ denote the current degree of $v$ in color $i$.  After some edges have been colored, $(\alpha^r\beta^s)_i$ denotes the number of edges of type $\alpha^r\beta^s$ colored with $i$.  All edges of a fixed type are parallel in $\cc G$, so whenever nonnegative target numbers for one edge type have the correct total, the corresponding edges may be colored accordingly.

The given one-factorization of $\cc H_m$ extends to a one-factorization of $\cc H_n$ precisely when the edges of $\cc G$ can be colored so that
\[
 (\dg_i(\alpha),\dg_i(\beta))=
 \begin{cases}
 (0,q),&i\text{ old},\\
 (m,q),&i\text{ new}.
 \end{cases}
\]
To see this, first suppose that the given one-factorization extends.  Delete the copy of $\cc H_m$ on the old vertices, identify the $m$ old vertices with $\alpha$, and identify the $q$ new vertices with $\beta$.  For each $r\geq0$ and $s\geq1$ with $r+s\leq3$, there are $\binom mr\binom qs$ sets containing exactly $r$ old and $s$ new vertices, each occurring $\lambda_{r+s}$ times, so the resulting hypergraph is $\cc G$.  An old color has no added edge meeting an old vertex, whereas a new color covers every old and every new vertex once.  The stated degree conditions follow.

Conversely, suppose that $\cc G$ has such a coloring.  Every edge occurring in $\cc G$ contains $\alpha$ at most $m$ times and $\beta$ at most $q$ times, so Lemma~\ref{splittinglemma} splits $\alpha$ and $\beta$ simultaneously into disjoint sets $A$ and $B$ of sizes $m$ and $q$, producing $\cc G'$.  If $U\subseteq A$ and $\varnothing\ne W\subseteq B$, let $r=|U|$ and $s=|W|$.  Whenever $r+s\leq3$, Lemma~\ref{splittinglemma} gives
\[
 (U\cup W)_{\cc G'}\approx
 \frac{(\alpha^r\beta^s)_{\cc G}}{\binom mr\binom qs}
 =\lambda_{r+s}.
\]
No edge lies entirely in $A$, because $\cc G$ has no edge containing only $\alpha$.  Hence the detached hypergraph is exactly $\cc H_n$ with the copy of $\cc H_m$ on $A$ deleted.

The degree conclusion of Lemma~\ref{splittinglemma} gives degree $0$ at every vertex of $A$ and degree $1$ at every vertex of $B$ in each old color, and degree $1$ at every vertex of $A\cup B$ in each new color.  We restore the given one-factorization of $\cc H_m$ on $A$ using the old colors.  Every color now has degree $1$ at every vertex of $A\cup B$, giving a one-factorization of $\cc H_n$ that extends the given one.

\begin{lemma}\label{lem:necessity}
If a one-factorization of $\cc H_m$ extends to a one-factorization of $\cc H_n$, then $(m,n)$ is feasible.
\end{lemma}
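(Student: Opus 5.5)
The plan is to prove necessity by counting inside the two-vertex reduction. Suppose the given one-factorization of $\cc H_m$ extends. By the discussion preceding this lemma, the edges of $\cc G$ then admit a coloring with $(\dg_i(\alpha),\dg_i(\beta))=(0,q)$ for old colors $i$ and $(m,q)$ for new colors $i$. Admissibility of $m$ and $n$ is immediate: both one-factorizations exist, so by Lemma~\ref{symcubequi1fac} and~\cite[Theorem~1.1]{MR4665304} both orders are admissible. The remaining conditions will come from the old colors. An old color has $\alpha$-degree $0$, so it uses only the pure-$\beta$ edges $\beta,\beta^2,\beta^3$. Its $\beta$-degree is $q$, so its profile $(s,p,t)$ satisfies $s+2p+3t=q$.

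For the inequality $q\ge m$, I would count $\beta$-incidences supplied by pure-$\beta$ edges. There are $m^2$ old colors, and each needs $q$ such incidences. The total available is
\[
(\beta)+2(\beta^2)+3(\beta^3)=q+3q(q-1)+q(q-1)(q-2)=q(q^2+1).
\]
Hence $m^2q\le q(q^2+1)$, that is, $m^2-1\le q^2$. For $m\ge2$ we have $(m-1)^2<m^2-1$, so this forces $q\ge m$; for $m=1$ it is trivial. This proves the first case of feasibility.

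When $q\equiv1\pmod3$, the relation $s+2p+3t=q$ gives $s+2p\equiv1\pmod3$. So every old color has $s\ge1$ or $p\ge2$. There are only $(\beta)=q$ singleton edges, so at least $m^2-q$ old colors have $p\ge2$. The pair supply is $(\beta^2)=3\binom q2$, which gives
\[
2(m^2-q)\le \tfrac32q(q-1).
\]
This is exactly $4m^2\le q(3q+1)$. If $m^2\le q$, the inequality holds trivially.

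It remains to exclude $(m,n)=(2,5)$, where $q=3$. I expect this to be the main obstacle, because it needs the new colors as well and not just a supply count. The old colors are forced. There are four of them, and each must cover three $\beta$-incidences with pure-$\beta$ edges, where $(\beta)=3$ and $(\beta^3)=2$. Counting shows the only solution is: two old colors of type $\beta^3$, two of type $\beta^2\beta$, and one singleton $\beta$ left over. The new colors then must use the following leftover edges:
\[
(\beta)=1,\quad (\beta^2)=7,\quad (\alpha\beta)=18,\quad (\alpha^2\beta)=6,\quad (\alpha\beta^2)=12.
\]
There are $21$ new colors, and each must have $\alpha$-degree $2$ and $\beta$-degree $3$. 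I would enumerate the few possible new-color profiles:
\[
\alpha^2\beta\cdot\beta^2,\quad \alpha^2\beta\cdot\alpha\beta\ \text{(too many }\alpha\text{)},\quad \alpha\beta\cdot\alpha\beta^2,\quad \alpha\beta\cdot\alpha\beta\cdot\beta,\quad \alpha^2\beta\cdot\beta\cdot\beta,\ \dots
\]
Then I would solve the resulting small linear system in nonnegative integers and show it has no solution. If this direct route fails, I would instead redo the old-color analysis by fixing which old colors use the pure-$\beta$ triples, and carry that information into the detached setting, where the count is then checked directly.
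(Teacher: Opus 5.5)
Your admissibility remark and the two counting inequalities follow the paper's argument. There is one harmless slip: $q+3q(q-1)+q(q-1)(q-2)=q^3$, not $q(q^2+1)$. Your weaker bound still yields $q\ge m$, and the correct total gives $m^2\le q^2$ directly. The gap is in the exclusion of $(m,n)=(2,5)$.

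Your claim that the old colors are forced into two of profile $(0,0,1)$ and two of profile $(1,1,0)$ is false. The possible old profiles $(s,p,t)$ are $(3,0,0)$, $(1,1,0)$, $(0,0,1)$, with supplies $(\beta)=3$, $(\beta^2)=9$, $(\beta^3)=2$. Besides your configuration there is a second one: one old color of profile $(0,0,1)$ and three of profile $(1,1,0)$. It uses exactly the three singleton edges and leaves $(\beta)=0$, $(\beta^2)=6$, $(\beta^3)=1$ for the new colors. So your case analysis misses a case. Moreover, the decisive step (``I would solve the resulting small linear system \ldots\ and show it has no solution'', with a fallback if it fails) is never carried out. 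As written, the exceptional pair is not excluded.

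The idea that closes this, and makes the old-color case split unnecessary, is to analyze the new colors first, as the paper does. Suppose a new color has $x$, $y$, $z$ edges of types $\alpha\beta$, $\alpha^2\beta$, $\alpha\beta^2$. Then $x+2y+z=2$ and $x+y+2z\le3$, so $(x,y,z)\in\{(2,0,0),(1,0,1),(0,1,0)\}$. Since there are $18$, $6$, $12$ edges of these types, the $21$ new colors have these types in numbers $3$, $12$, $6$.

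Each of the three $(2,0,0)$ colors has $\beta$-degree $2$ and can only be completed by a singleton $\beta$-edge. So these three colors consume all three $\beta$-edges, and no old color contains a singleton. Each old color then satisfies $2p+3t=3$, so it needs a $\beta^3$-edge. Four are needed, but only two exist.

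In your own linear system, the $\alpha\beta$- and $\alpha\beta^2$-counts already force exactly three new colors of type $\alpha\beta\cdot\alpha\beta\cdot\beta$. That is incompatible with the at most one leftover singleton in either old-color configuration. So your route can be repaired, but only by doing this computation in both cases.
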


\begin{proof}
Set $q=n-m$.  Necessarily $m$ and $n$ are admissible.  Each old color already covers every old vertex, so every added edge of an old color lies entirely on the $q$ new vertices.  For an old color $i$, let $s_i,p_i,t_i$ be the numbers of $\beta$-, $\beta^2$-, and $\beta^3$-edges, respectively.  Then $s_i+2p_i+3t_i=q$.

Counting $\beta$-incidences in the $m^2$ old colors gives
\[
 m^2q\leq q+6\binom q2+6\binom q3=q^3,
\]
and hence $q\geq m$.

Suppose now that $q\equiv1\pmod3$.  If $s_i=0$, then $2p_i\equiv1\pmod3$, so $p_i\equiv2\pmod3$ and therefore $p_i\geq2$.  At most $q$ old colors contain a $\beta$-edge, while every remaining old color requires at least two $\beta^2$-edges.  Consequently
\[
 m^2\leq q+\left\lfloor\frac{3\binom q2}{2}\right\rfloor
 =\left\lfloor\frac{q(3q+1)}4\right\rfloor,
\]
so $4m^2\leq q(3q+1)$.

It remains to exclude $(m,n)=(2,5)$.  Here $q=3$.  For a new color, let $x,y,z$ be the numbers of $\alpha\beta$-, $\alpha^2\beta$-, and $\alpha\beta^2$-edges.  Its required degrees give $x+2y+z=2$ and $x+y+2z\leq3$, whose only solutions are $(2,0,0)$, $(1,0,1)$, and $(0,1,0)$.  Since there are respectively $18$, $6$, and $12$ edges of the three types involving both $\alpha$ and $\beta$, the $21$ new colors have these types in numbers $3$, $12$, and $6$.

The three colors of type $(2,0,0)$ require all three $\beta$-edges, while each of the six colors of type $(0,1,0)$ requires a $\beta^2$-edge.  Thus no $\beta$-edge remains for an old color.  Each of the four old colors must consequently contain a $\beta^3$-edge in order to have degree $3$ at $\beta$, but only two such edges exist, a contradiction.  This excludes $(m,n)=(2,5)$ and completes the proof.
\end{proof}

By Lemma~\ref{symcubequi1fac}, Theorem~\ref{thm:symmetric-cube-embedding} is equivalent to the following statement.

\begin{theorem}\label{thm:hypergraph-embedding}
A one-factorization of $\cc H_m$ embeds in one of $\cc H_n$ if and only if $(m,n)$ is a feasible pair.
\end{theorem}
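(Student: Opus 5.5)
Necessity is Lemma~\ref{lem:necessity}. For sufficiency we assume $(m,n)$ is feasible, fix a one-factorization of $\cc H_m$, and use the two-vertex reduction. It suffices to color the edges of $\cc G$ so that every old color $i$ has $(\dg_i(\alpha),\dg_i(\beta))=(0,q)$ and every new color has $(m,q)$. Edges of a fixed type are parallel, so the whole problem is an integer allocation. We must choose a profile for each color, say how many edges of each of the six types it receives, and check that each type's column total equals its multiplicity in $\cc G$. Lemma~\ref{splittinglemma} then finishes the argument. The case $m=1$ is just the existence theorem of~\cite{MR4665304} with one vertex prescribed, and it can be handled the same way.

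\textbf{Old colors.} Each old color uses only $\beta$, $\beta^2$, $\beta^3$ edges with $s_i+2p_i+3t_i=q$. We give the $m^2$ old colors profiles that use as few $\beta$-edges as possible, so that they leave pure-$\beta$ edges over for the new colors.
\begin{itemize}
\item If $q\equiv0\pmod3$, use $t_i=q/3$.
\item If $q\equiv2\pmod3$, use $p_i=1$, $t_i=(q-2)/3$.
\item If $q\equiv1\pmod3$, at most $q$ old colors get a singleton plus triples, and the rest get $p_i=2$ with triples.
\end{itemize}
The inequality $4m^2\le q(3q+1)$ is exactly what makes enough $\beta^2$-edges available in the last case. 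Using $q\ge m$, we check that the $\beta^3$-edges suffice, since $2\binom q3\ge m^2(q-2)/3$ roughly when $q\ge m$. Whatever $\beta$, $\beta^2$, $\beta^3$ edges remain must go to the new colors.

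\textbf{New colors.} A new color has $\alpha$-degree $m$ and $\beta$-degree $q$. It consists of some $\alpha\beta$, $\alpha^2\beta$, $\alpha\beta^2$ edges, which we call mixed, plus leftover pure-$\beta$ edges. Write $x,y,z$ for its numbers of $\alpha\beta$, $\alpha^2\beta$, $\alpha\beta^2$ edges. Then $x+2y+z=m$, and $x+y+2z$ plus the pure-$\beta$ degree equals $q$. There are $n^2-m^2=q(2m+q)$ new colors. The mixed edge counts are $3mq$, $qm(m-1)$, $mq(q-1)$, and we distribute them nearly evenly.
\begin{itemize}
\item We use two or three profile types, with $(x,y,z)$ chosen so that $x\equiv m\pmod 2$.
\item We solve a small linear system for how many colors receive each type, so that the type totals match the counts exactly.
\item We then fill each new color's remaining $\beta$-degree with the leftover $\beta,\beta^2,\beta^3$ edges, again respecting residues mod~$3$.
\end{itemize}
A useful fact is that the average new color has $\beta$-degree from mixed edges equal to $(3mq+qm(m-1)+2mq(q-1))/(q(2m+q))$, which is less than $q$ exactly when there is room. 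The totals of pure-$\beta$ degree then balance automatically, because all degree sums match.

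\textbf{Main obstacle.} The hard step is the exact integer feasibility near the boundaries $q=m$ and $4m^2=q(3q+1)$. There the slack is zero, so the profiles are forced, and parity and mod-3 constraints on the leftover pure-$\beta$ edges (especially the $\beta$ singletons) must be met exactly. The exceptional pair $(2,5)$ shows that the constraints can actually fail. We would split by $q\bmod 3$ and by $m\bmod 3$ and the parity of $m$, write explicit profile multiplicities as functions of $m,q$, and verify nonnegativity under feasibility. Small cases such as $m=2$ and small $q$ would be checked by hand.
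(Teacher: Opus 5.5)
There is a genuine gap: the sufficiency half is a plan rather than an allocation, and the one concrete choice you commit to fails exactly where it matters. For $q\equiv0\pmod3$ you give every old color the profile $(0,0,q/3)$. That needs $m^2q/3$ edges of type $\beta^3$, but only $2\binom q3=q(q-1)(q-2)/3$ exist, so you would need $m^2\le(q-1)(q-2)$. This is false for the feasible pairs with $q=m$, for example $(m,n)=(6,12)$, which needs $72$ triples out of $40$. It is also false for $q=m+1$, for example $(5,11)$, which needs $50$ out of $40$. These are precisely the smallest hosts $h(m)$ for every $m\ge5$. Your supporting estimate ``$2\binom q3\ge m^2(q-2)/3$ when $q\ge m$'' is equivalent to $q(q-1)\ge m^2$, and it is itself false at $q=m$. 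The failure is structural, not a small-case nuisance. Summing $s_i+2p_i+3t_i=q$ over the old colors gives $D_1+2D_2+3D_3=q(q-m)(q+m)$ for the numbers of unused pure-$\beta$ edges (Lemma~\ref{lem:conservation}). So at $q=m$ the old colors must use \emph{every} $\beta$-, $\beta^2$- and $\beta^3$-edge, and the new colors have no room for any pure-$\beta$ edge. Your guiding principle of keeping the old colors off the $\beta$- and $\beta^2$-edges so as to leave them to the new colors is therefore backwards at the sharp boundary.

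The remaining steps are where all the difficulty lies, and they are not carried out: ``two or three profile types'', ``solve a small linear system'', and ``fill the remaining $\beta$-degree respecting residues mod $3$''. The paper resolves them with two ideas absent from your sketch.

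First, it never fixes the $\beta^2/\beta^3$ split color by color. It records only $r_i=2a_i+3b_i$. Since two $\beta^3$-edges can be traded for three $\beta^2$-edges without changing $\beta$-degree, Lemma~\ref{lem:completion} realizes the split globally, provided $\sum_i a_i\le3\binom q2$ and at most $2\binom q3$ of the $b_i$ are odd. This trade is what lets the old colors absorb all the pure-$\beta$ edges at $q=m$.

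Second, each new color's number $x_i$ of $\alpha^2\beta$-edges ranges over an interval $[\iota_i,\rho_i]$ on which $b_i=c_i(x_i)$ rises by one per step. This lets one hit the exact total $qm(m-1)$ while leaving at most $q^2$ odd $b_i$ among the new colors. The pairs $(m,q)=(5,6),(6,6),(6,8)$ and $(5,7)$ still need separate counts.

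Finally, $m=2$ is an infinite family of hosts and cannot be ``checked by hand''. The paper uses that $\cc H_2$ has a unique one-factorization, so it suffices to put all singletons of $\cc H_n$ in one factor (Lemma~\ref{lem:singleton-factor}).
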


From now on, we set $q=n-m$ and assume that $(m,n)$ is feasible.  For a real number $x$, we let $x^+=\max\{0,x\}$.  We shall use repeatedly that $q\geq m$.  This is part of feasibility when $q\not\equiv1\pmod3$.  If $q\equiv1\pmod3$ and $q<m$, then $q(3q+1)\leq(m-1)(3m-2)<4m^2$, contrary to feasibility.

\medskip
\noindent\textit{The order-two case.}
A one-factorization of $\cc H_2$ is unique up to relabeling colors: the three parallel pair edges are three one-factors, and the two singleton edges form the fourth.  Thus, for $m=2$, it is enough to find a one-factorization of $\cc H_n$ in which all singleton edges lie in one factor.

\begin{lemma}\label{lem:singleton-factor}
If $n\geq6$ is admissible, then $\cc H_n$ has a one-factorization in which all $n$ singleton edges form one factor.
\end{lemma}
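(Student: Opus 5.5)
The plan is to amalgamate all $n$ vertices of $\cc H_n$ into a single vertex $\alpha$. Then I solve a small integer allocation problem on the amalgamated hypergraph and recover $\cc H_n$ with the one-vertex form of Lemma~\ref{splittinglemma}. Concretely, let $\cc K$ be the hypergraph on $\{\alpha\}$ with edge multiplicities
\[
 (\alpha)=n,\qquad (\alpha^2)=3\binom n2,\qquad (\alpha^3)=2\binom n3 .
\]
Suppose the edges of $\cc K$ are coloured with $n^2$ colours so that the following hold.
\begin{itemize}
\item Colour $0$ receives exactly the $n$ singleton edges.
\item Every other colour $i$ receives $p_i$ edges $\alpha^2$ and $t_i$ edges $\alpha^3$ with $2p_i+3t_i=n$.
\end{itemize}
Then every colour has degree $n$ at $\alpha$. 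No edge contains $\alpha$ more than $3\le n$ times, so $\alpha$ splits fairly into $n$ vertices. The detachment guarantees are as follows.
\begin{itemize}
\item Every singleton, pair and triple of distinct vertices occurs $\approx n/n=1$, $\approx 3\binom n2/\binom n2=3$ and $\approx 2\binom n3/\binom n3=2$ times respectively. These ratios are integers, so the multiplicities are exact and the detached hypergraph is $\cc H_n$.
\item Every colour has degree $\approx n/n=1$ at every vertex, so each colour class is a one-factor.
\end{itemize}
Colour $0$ then consists of exactly the $n$ singleton edges, as required.

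It therefore suffices to find nonnegative integers $p_i,t_i$ for $1\le i\le n^2-1$ satisfying
\[
 2p_i+3t_i=n,\qquad \sum_i p_i=3\binom n2,\qquad \sum_i t_i=2\binom n3 .
\]
The degree count $2\cdot3\binom n2+3\cdot2\binom n3=n(n-1)(n+1)=n(n^2-1)$ matches the sum of the $n^2-1$ colour degrees. Hence, once the individual equations hold, the $p$-total equation implies the $t$-total equation, and only $\sum p_i$ needs to be arranged.

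The allocation splits by the residue of $n$ modulo $3$.
\begin{itemize}
\item \emph{Case $n\equiv0\pmod3$.} Here $p_i\equiv0\pmod3$. I take $\binom n2$ colours with profile $(0,3,(n-6)/3)$ and the remaining $n^2-1-\binom n2\ge0$ colours with profile $(0,0,n/3)$. The first profile needs $n\ge6$, which is exactly where the hypothesis $n\ge6$ enters.
\item \emph{Case $n\equiv2\pmod3$.} Here $p_i\equiv1\pmod3$, so every colour has $p_i\ge1$. I use $p_i\in\{1,4\}$. If $k$ colours take $p_i=4$, the requirement is $(n^2-1)+3k=3\binom n2$, that is, $k=(n-1)(n-2)/6$. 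This is an integer because $3\mid n-2$ and $2\mid(n-1)(n-2)$, and clearly $k\le n^2-1$. The profile $(0,4,(n-8)/3)$ requires $n\ge8$, which holds since the admissible $n\ge6$ with $n\equiv2$ satisfy $n\ge8$. The profile $(0,1,(n-2)/3)$ is always valid.
\end{itemize}

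Since all edges of a given type are parallel, these target counts can be realized by an actual colouring of $\cc K$, and the detachment argument above finishes the proof. I do not expect a real obstacle. The only delicate points are the congruence bookkeeping, namely that $k$ is integral and that every chosen profile has $t_i\ge0$, and the observation that exact integer ratios make the $\approx$ conclusions of Lemma~\ref{splittinglemma} into equalities.
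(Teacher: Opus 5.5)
Your proposal is correct and is essentially the paper's proof. You amalgamate to a single vertex and use exactly the paper's profiles with the same multiplicities: $(0,3,(n-6)/3)$ and $(0,0,n/3)$ when $n\equiv0\pmod 3$, and $(0,4,(n-8)/3)$ and $(0,1,(n-2)/3)$ when $n\equiv2\pmod 3$, since $n^2-1-\binom n2=(n-1)(n+2)/2$ and $n^2-1-k=(n-1)(5n+8)/6$. You then finish with the one-vertex form of the detachment lemma, as the paper does. Your observation that the pair total forces the triple total is a slightly tidier way to verify the column sums than checking each column directly.
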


\begin{proof}
Let $\cc F$ be obtained from $\cc H_n$ by identifying its $n$ vertices with a single vertex $\alpha$.  We color the edges of $\cc F$ according to the following profiles; the first column gives the number of colors of each profile.

{\small
\[
\begin{array}{c@{\qquad}c}
\begin{array}{c|ccc}
\multicolumn{4}{c}{n\equiv0\pmod3}\\[1mm]
\text{number}&(\alpha)_i&(\alpha^2)_i&(\alpha^3)_i\\ \hline
1&n&0&0\\[1mm]
\dfrac{(n-1)(n+2)}2&0&0&\dfrac n3\\[2mm]
\dfrac{n(n-1)}2&0&3&\dfrac{n-6}{3}
\end{array}
&
\begin{array}{c|ccc}
\multicolumn{4}{c}{n\equiv2\pmod3}\\[1mm]
\text{number}&(\alpha)_i&(\alpha^2)_i&(\alpha^3)_i\\ \hline
1&n&0&0\\[1mm]
\dfrac{(n-1)(5n+8)}6&0&1&\dfrac{n-2}{3}\\[2mm]
\dfrac{(n-1)(n-2)}6&0&4&\dfrac{n-8}{3}
\end{array}
\end{array}
\]
}

All entries are nonnegative integers, and each row has degree $n$ at $\alpha$.  In either case the row multiplicities sum to $n^2$, while the three edge-type columns use exactly $n$, $3\binom n2$, and $2\binom n3$ edges, respectively.  Thus every edge of $\cc F$ is colored exactly once, every color has degree $n$ at $\alpha$, and all $\alpha$-edges have the same color.

Apply the one-vertex form of Lemma~\ref{splittinglemma} to split $\alpha$ into a set $A$ of $n$ vertices, producing $\cc F'$.  For every color $i$, every $x\in A$, and every nonempty $U\subseteq A$ with $|U|\leq3$,
\[
 \dg_{\cc F'(i)}(x)\approx\frac{\dg_{\cc F(i)}(\alpha)}n=1,
 \qquad
 (U)_{\cc F'}\approx
 \frac{(\alpha^{|U|})_{\cc F}}{\binom n{|U|}}
 =
 \begin{cases}
 1,&|U|=1,\\
 3,&|U|=2,\\
 2,&|U|=3.
 \end{cases}
\]
Hence the resulting hypergraph is $\cc H_n$, every color class is a one-factor, and since all $\alpha$-edges had the same color before the split, all singleton edges of $\cc H_n$ lie in one factor.
\end{proof}

\medskip
\noindent\textit{The general construction for $m\geq5$.}
We now return to the two-vertex hypergraph $\cc G$ defined above.  Throughout Lemmas~\ref{lem:completion} and~\ref{lem:partial-coloring}, all edges, edge types, and degrees refer to $\cc G$.  The next lemma shows how to finish the coloring once only the $\beta^2$- and $\beta^3$-edges remain.

\begin{lemma}\label{lem:completion}
Suppose $\cc G$ has been partially colored so that the only uncolored edges are those of types $\beta^2$ and $\beta^3$.  For each color $i$, put $r_i=q-\dg_i(\beta)$.  Suppose there are nonnegative integers $a_i,b_i$ such that $r_i=2a_i+3b_i$, with $\sum_i a_i\leq3\binom q2$, and with at most $2\binom q3$ of the $b_i$ odd.  Then the coloring extends to all of $\cc G$ so that $\dg_i(\beta)=q$ for every color $i$.
\end{lemma}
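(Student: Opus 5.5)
The plan is to turn the degree requirement into a pure counting problem. All remaining edges of type $\beta^2$ are parallel, and so are all remaining edges of type $\beta^3$. So it suffices to choose, for each color $i$, nonnegative integers $x_i,y_i$ with
\[
 2x_i+3y_i=r_i,\qquad \sum_i x_i=P:=3\binom q2,\qquad \sum_i y_i=T:=2\binom q3 .
\]
We then give color $i$ exactly $x_i$ of the $\beta^2$-edges and $y_i$ of the $\beta^3$-edges. This is possible because the targets have the correct totals, as remarked after the definition of $\cc G$. Afterwards every color has $\dg_i(\beta)=q$, and every edge has been colored.

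First I will check the global count. In $\cc G$ the total number of $\beta$-incidences is $q n^2$: these are the degrees in $\cc H_n$ of the $q$ new vertices, none of whose edges were removed. A complete coloring with $\dg_i(\beta)=q$ for all $n^2$ colors uses exactly these incidences. The incidences already colored account for $\sum_i\dg_i(\beta)$, and the uncolored ones contribute $2P+3T$. So the intended identity is
\[
 \sum_i r_i=qn^2-\sum_i\dg_i(\beta)=2P+3T .
\]
I expect this to hold in the setting where the lemma is applied: the partial coloring colors every edge other than $\beta^2$ and $\beta^3$, and the target is $\dg_i(\beta)=q$ for all $i$. I will state it explicitly, since the rest of the argument depends on it.

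Next, starting from the given pairs $(a_i,b_i)$, put $A=\sum_i a_i$ and $B=\sum_i b_i$. Then $2A+3B=2P+3T$ and $A\le P$. It follows that $2(P-A)=3(B-T)$, so $P-A=3k$ and $B-T=2k$ for some integer $k\ge0$. The key move is the exchange
\[
 (a_i,b_i)\mapsto(a_i+3,\,b_i-2),
\]
allowed whenever $b_i\ge2$. It preserves $2a_i+3b_i=r_i$ and nonnegativity, raises $A$ by $3$, and lowers $B$ by $2$.

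The only real point, and the expected obstacle, is showing that $k$ such exchanges are available. Color $i$ admits $\lfloor b_i/2\rfloor$ exchanges, after which $b_i$ is reduced to $0$ or $1$ according to its parity. Applying all available exchanges would therefore bring $B$ down to the number $o$ of odd $b_i$. The hypothesis gives $o\le T$, and we have $B-T=2k$. Hence the total number of available exchanges, $(B-o)/2$, is at least $(B-T)/2=k$.

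Performing exactly $k$ exchanges, distributed among colors in any way that respects the per-color limits, yields $(x_i,y_i)$ with $\sum_i x_i=P$ and $\sum_i y_i=T$. The construction in the first paragraph then completes the coloring.
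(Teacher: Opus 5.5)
Your proposal is correct and follows the paper's proof essentially step for step. It uses the same exchange of two $\beta^3$-edges for three $\beta^2$-edges, the same number $k$ of required exchanges (the paper's $d=\binom q2-\frac13\sum_i a_i$) obtained from the global $\beta$-incidence count, and the same estimate $\sum_i\lfloor b_i/2\rfloor=(B-o)/2\geq k$ from the parity hypothesis. Your hedge about the identity $\sum_i r_i=2P+3T$ is unnecessary: as you yourself show, it follows from the hypotheses, because $\cc G$ has exactly $qn^2$ incidences at $\beta$ and there are $n^2$ colors. The paper uses this identity in the same way without further comment.
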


\begin{proof}
Think of $a_i$ and $b_i$ as tentative numbers of $\beta^2$- and $\beta^3$-edges for color $i$.  They supply the required degree $r_i=2a_i+3b_i$, but their totals need not equal the numbers of edges available.  The key observation is that replacing two $\beta^3$-edges by three $\beta^2$-edges preserves degree at $\beta$.

Summing the deficits over all colors gives $2\sum_i a_i+3\sum_i b_i=6\binom q2+6\binom q3$.  Hence
\[
 d=\binom q2-\frac13\sum_i a_i
\]
is a nonnegative integer, and $\sum_i b_i=2\binom q3+2d$.  Thus exactly $d$ replacements are needed.

Let $o$ be the number of odd $b_i$.  Since $o\leq2\binom q3$,
\[
 \sum_i\left\lfloor\frac{b_i}{2}\right\rfloor
 =\frac{\sum_i b_i-o}{2}
 =d+\binom q3-\frac{o}{2}\geq d.
\]
We may therefore choose integers $0\leq t_i\leq\lfloor b_i/2\rfloor$ with $\sum_i t_i=d$.  Set $u_i=a_i+3t_i$ and $v_i=b_i-2t_i$.  Then
\[
 \sum_i u_i=3\binom q2,\qquad
 \sum_i v_i=2\binom q3,\qquad
 2u_i+3v_i=r_i.
\]
Color $u_i$ of the remaining $\beta^2$-edges and $v_i$ of the remaining $\beta^3$-edges with color $i$.  These totals use every remaining edge exactly once, and every color finishes with degree $q$ at $\beta$.
\end{proof}

\noindent The main allocation is the following.

\begin{lemma}\label{lem:partial-coloring}
Suppose $m\geq5$ and $(m,n)$ is feasible.  All edges of $\cc G$ of types $\beta$, $\alpha\beta$, $\alpha^2\beta$, and $\alpha\beta^2$ can be colored so that $\dg_i(\alpha)=0$ for old colors and $\dg_i(\alpha)=m$ for new colors.  Moreover, for every color $i$, the remaining deficit $r_i=q-\dg_i(\beta)$ can be written as $r_i=2a_i+3b_i$ with $a_i,b_i\geq0$, where $\sum_i a_i\leq3\binom q2$ and at most $2\binom q3$ of the $b_i$ are odd.
\end{lemma}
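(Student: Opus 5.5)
The plan is to fix, for every color, a \emph{profile} $(x_i,y_i,z_i,\sigma_i)$ of $\alpha\beta$-, $\alpha^2\beta$-, $\alpha\beta^2$- and $\beta$-edges. Since all edges of one type in $\cc G$ are parallel, a coloring exists as soon as the column totals are $3mq$, $qm(m-1)$, $mq(q-1)$ and $q$, respectively. First, I observe that the $\alpha$-incidences of these four types total
\[
3mq+2qm(m-1)+mq(q-1)=mq(2m+q)=m(n^2-m^2).
\]
This is exactly the required $\alpha$-degree of the new colors. So every edge meeting $\alpha$ must go to a new color, and each new color satisfies $x_i+2y_i+z_i=m$; old colors receive only $\beta$-edges. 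Second, I will record the arithmetic of the deficits. Since $r_i=2a_i+3b_i$ forces $b_i\equiv r_i\pmod 2$, the number of odd $b_i$ equals the number of colors with odd deficit, independent of the representation. For each color I will take the representation with $a_i\in\{0,1,2\}$, namely $a_i\equiv 2r_i\pmod 3$; this is possible whenever $r_i\ge0$ and $r_i\ne1$. The conditions of the lemma thus become three checks: $r_i\ge0$, $r_i\ne1$ for all $i$; $\sum_i a_i\le 3\binom q2$; and at most $2\binom q3$ colors have odd deficit.

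For the new colors I will distribute the $\alpha$-edges as evenly as possible. The averages are $y\approx m(m-1)/(2m+q)$, $z\approx m(q-1)/(2m+q)$ and $x\approx 3m/(2m+q)$. I will use two or three explicit profile rows, in the style of the tables in Lemma~\ref{lem:singleton-factor}, with multiplicities solving the linear system for the column totals; the solution splits according to $q$ modulo small numbers and to whether $q$ is close to $m$. Each new color then has $\beta$-degree $x_i+y_i+2z_i$, which is about $mq/(2m+q)+m(q-1)\cdot 2/(2m+q)$ and at most $q$ because $q\ge m$. I will choose the rows so that this degree is never $q-1$, adjusting one $z$ against $x$ (for instance trading $(x,z)$ by $(x+1,z-1)$) when it would be. Where possible I will also make the deficits even, which keeps the odd count small, since $2\binom q3$ is huge for $m\ge5$ anyway.

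For the old colors, the deficit is $q$ minus the number of $\beta$-singletons they receive. If $q\not\equiv1\pmod3$, I give all $q$ singletons to new or old colors freely, and old deficits $q$ need $a_i\le1$; the bound $\sum a_i\le 3\binom q2$ is then easy from $m^2\le q^2$ together with the small $a_i$ of the new colors. The case $q\equiv1\pmod3$ is the crux. I will give the $q$ singletons to $q$ distinct old colors, so their deficits are $q-1\equiv0$ and $a_i=0$. The other $m^2-q$ old colors have $r_i=q$ and $a_i=2$. The new colors must then have deficits $\equiv0\pmod 3$ (so $a_i=0$), which I will enforce when choosing their profiles by making $x_i+y_i+2z_i\equiv1\pmod 3$. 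This gives $\sum a_i=2(m^2-q)$, and feasibility $4m^2\le q(3q+1)$ is exactly $2(m^2-q)\le 3\binom q2$.

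The main obstacle I expect is this simultaneous congruence-and-total requirement on the new colors when $q\equiv1\pmod3$ and $q$ is near the threshold. Exact column totals must be met while every new deficit is a nonnegative multiple of $3$. I would first try two profile rows differing by a move $(y,z)\to(y-1,z+2)$ combined with $x\to x-1$, which preserves $\alpha$-degree and changes $\beta$-degree by $3$. Any leftover imbalance would be absorbed by the (ample) slack in $3\binom q2-2(m^2-q)$, allowing a few new colors to have $a_i\in\{1,2\}$. The hypothesis $m\ge5$ is used to guarantee that these rows have nonnegative entries.
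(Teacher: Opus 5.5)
Your framework (choose per-color counts, use parallelism of same-type edges, reduce to the residues of the deficits) is the paper's, and your preliminary reductions are correct. Every edge meeting $\alpha$ must receive a new color. Since $b_i\equiv r_i\pmod 2$, the parity condition only counts odd deficits. The representation with $a_i\in\{0,1,2\}$ exists exactly when $r_i\ge0$ and $r_i\ne1$, and it minimizes $a_i$. Finally, $2(m^2-q)\le3\binom q2$ is precisely $4m^2\le q(3q+1)$.

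\textbf{The missing step.} The lemma's whole content is what you defer: new-color profiles that meet the four column totals exactly, with every new deficit nonnegative, different from $1$, in a prescribed class modulo $3$, and rarely odd. You never produce the ``two or three explicit profile rows'', and the per-color constraints are much tighter than your averages suggest. In your notation, a new color with no edge of type $\beta$ has $\beta$-degree $m-y_i+z_i$ and deficit $r_i=q-2m+x_i+3y_i\equiv n+x_i\pmod3$. So ``at most $q$ because $q\ge m$'' holds only on average; at $q=m$ it forces $y_i=z_i$ for every new color. Likewise $a_i=0$ requires $x_i\equiv0$ when $n\equiv0$ but $x_i\equiv1$ when $n\equiv2\pmod3$. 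In the latter case there are only $3mq$ edges of type $\alpha\beta$ for $q(q+2m)$ new colors, so at least $q(q-m)-\lfloor q/2\rfloor$ new colors have $a_i\ge1$. Hence the bound $\sum_i a_i\le3\binom q2$ for $q\not\equiv1\pmod3$ is not ``easy from $m^2\le q^2$''; it needs a real computation (in the paper, $q^2+(2m-5)q-2m^2\ge0$ using $q\ge m+2$, $m\ge6$). In the paper, your missing rows are replaced by:
\begin{itemize}
\item the table of row types and the intervals $[\iota_i,\rho_i]$;
\item the estimates $\sum_i\iota_i\le qm(m-1)\le\sum_i\rho_i$, indeed $\sum_i\rho_i-qm(m-1)\ge2mq$ outside $(m,q)=(5,7)$;
\item the even-endpoint allocation leaving at most $q^2$ odd values.
\end{itemize}

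\textbf{Specific claims that fail.}
\begin{itemize}
\item When $q\equiv1\pmod3$ and $q>m^2$ (e.g.\ $m=5$, $q=28$), you cannot give the $q$ singletons to distinct old colors, and $2(m^2-q)$ is negative. Some singletons must go to new colors; this is why the paper introduces $\delta=(q-m^2)^+$ and pairs each such singleton with one $\alpha\beta$-edge.
\item Your fallback of absorbing imbalance into ``ample'' slack is unavailable. The slack $3\binom q2-2(m^2-q)=\bigl(q(3q+1)-4m^2\bigr)/2$ vanishes for infinitely many feasible pairs, starting with $(m,q)=(14,16)$ (Corollary~\ref{cor:pell-boundary}). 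There exactly $q$ old colors must receive one singleton each, and every other color, in particular every new color, must have deficit $\equiv0\pmod3$, with no room to adjust.
\item The move you propose, $(x,y,z)\to(x-1,y-1,z+2)$, changes the $\alpha$-degree by $-1$ and the $\beta$-degree by $+2$. The $\alpha$-preserving move with $\beta$-change $3$ is $(y,z)\to(y-1,z+2)$ with $x$ fixed.
\item $2\binom q3$ is not ``huge'' for small $q$. For $(m,q)=(5,7)$, at least $18$ old colors have deficit $7$. Comparing $\sum_i r_i$ over the new colors with the bound on $\sum_i a_i$ then shows that every admissible choice has at least $56$ odd $b_i$, against the limit $70=2\binom73$. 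The paper needs a separate table for $(5,7)$, which attains exactly $70$, and direct counts for $(5,6)$, $(6,6)$, $(6,8)$.
\end{itemize}
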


\begin{proof}

By Lemma~\ref{lem:completion}, it is enough to color the $\beta$-, $\alpha\beta$-, $\alpha^2\beta$-, and $\alpha\beta^2$-edges so that the remaining degree satisfies $r_i=2a_i+3b_i$, with enough $\beta^2$-edges available and sufficiently few odd $b_i$.  We do this in four parts: old colors, the initial new-color table, the $\alpha^2\beta$- and $\alpha\beta^2$-edge allocation, and parity control.  We first assume $(m,q)\ne(5,7)$; this case is handled separately at the end.

\noindent\textit{Old colors.}  We set $\delta=(q-m^2)^+$.  For the old colors, choose $q-\delta$ of the $\beta$-edges and color them with distinct old colors, one edge with each color.  Leave the remaining $\delta$ $\beta$-edges uncolored for the moment.  Thus $(\beta)_i=1$ for the old colors used and $(\beta)_i=0$ for the others.  For each old color $i$, let $a_i\in\{0,1,2\}$ be the unique integer such that $q-(\beta)_i-2a_i\equiv0\pmod3$, and set
\[
 b_i=\frac{q-(\beta)_i-2a_i}{3}.
\]
Then $b_i$ is a nonnegative integer: integrality follows from the choice of $a_i$, and $b_i\geq0$ because $q-(\beta)_i-2a_i\geq q-5\geq0$.  Since an old color receives no edge containing $\alpha$, it has degree $0$ at $\alpha$, and
\[
 q-\dg_i(\beta)=q-(\beta)_i=2a_i+3b_i.
\]
For the remainder of the proof, $i$ denotes a new color, and all sums are over the new colors unless explicitly stated otherwise.

\noindent\textit{Initial coloring of the new colors.}  For a new color $i$, let $x_i$ denote the number of $\alpha^2\beta$-edges that will eventually receive color $i$.  Once $x_i$ is chosen, the requirement $\dg_i(\alpha)=m$ forces the number of $\alpha\beta^2$-edges to be $y_i=m-(\alpha\beta)_i-2x_i$.  No edge of either of these two types is colored yet.  The interval $[\iota_i,\rho_i]$ in the table records the range in which $x_i$ will be allowed to vary: the upper endpoint guarantees $y_i\geq0$, while the lower endpoint guarantees that the remaining degree needed at $\beta$ has the form required for the final completion.

We first color the $\beta$- and $\alpha\beta$-edges according to the following table.  For each color in a row, color the indicated number of currently uncolored edges of these two types with that color, take the indicated value of $a_i$, and record the interval $[\iota_i,\rho_i]$.  For the $\delta$ colors in the first row in either case, color one $\beta$-edge with each color; no other new color receives a $\beta$-edge.

Since $m\equiv0$ or $2\pmod3$, we have $m^2\equiv2m\pmod3$.  Thus, when $n\equiv0\pmod3$, $q-m^2\equiv q-2m\equiv0\pmod3$, and hence $3\mid\delta$.  Also, $3\mid(2m-q)$ when $n\equiv0\pmod3$, while $3\mid(2m+2-q)$ when $n\equiv2\pmod3$.  Hence all row counts and all lower endpoints $\iota_i$ in the following table are integers.  The number in the second column is the number of new colors of the indicated type.
\[
\begin{array}{c|c|ccc|cc}
&\text{number}&(\beta)_i&(\alpha\beta)_i&a_i&\iota_i&\rho_i\\ \hline
&\delta
&1&1&0
&\displaystyle\left(\frac{2m-q}{3}\right)^+
&\displaystyle\left\lfloor\frac{m-1}{2}\right\rfloor\\[3mm]
n\equiv0\pmod3
&\displaystyle mq-\frac{\delta}{3}
&0&3&0
&\displaystyle\left(\frac{2m-q}{3}-1\right)^+
&\displaystyle\left\lfloor\frac{m-3}{2}\right\rfloor\\[3mm]
&\displaystyle q(q+m)-\frac{2\delta}{3}
&0&0&0
&\displaystyle\left(\frac{2m-q}{3}\right)^+
&\displaystyle\left\lfloor\frac m2\right\rfloor\\[2mm] \hline
&\delta
&1&1&1
&\displaystyle\left(\frac{2m+2-q}{3}\right)^+
&\displaystyle\left\lfloor\frac{m-1}{2}\right\rfloor\\[3mm]
n\equiv2\pmod3
&3mq-\delta
&0&1&0
&\displaystyle\left(\frac{2m+2-q}{3}-1\right)^+
&\displaystyle\left\lfloor\frac{m-1}{2}\right\rfloor\\[3mm]
&q(q-m)
&0&0&1
&\displaystyle\left(\frac{2m+2-q}{3}\right)^+
&\displaystyle\left\lfloor\frac m2\right\rfloor.
\end{array}
\]
In every row, $(\alpha\beta)_i+2\rho_i\leq m$, so $x_i\leq\rho_i$ will indeed imply $y_i\geq0$.

In each case the row counts are nonnegative integers and sum to $q(q+2m)=n^2-m^2$, so they partition the new colors as claimed.  Here we use only $0\leq\delta\leq q$ and $q\geq m$; summing the rows gives $q(q+2m)$.  The $\beta$-column has total $\delta$, so together with the $q-\delta$ $\beta$-edges already colored with old colors, every $\beta$-edge is colored exactly once.  Likewise, the $\alpha\beta$-column sums to $3mq$, so every $\alpha\beta$-edge is colored exactly once.

Among the old colors, $q-\delta=\min\{q,m^2\}$ have $(\beta)_i=1$.  According as $q\equiv0,1,2\pmod3$, the values of $a_i$ for $(\beta)_i=1$ and $(\beta)_i=0$ are $(1,0)$, $(0,2)$, and $(2,1)$, respectively.  The new colors contribute $0$ when $n\equiv0\pmod3$, and $\delta+q(q-m)$ when $n\equiv2\pmod3$.  Hence
\[
\sum_i a_i=
\begin{cases}
\min\{q,m^2\},
&n\equiv0,\ q\equiv0\pmod3,\\[1mm]
2\bigl(m^2-\min\{q,m^2\}\bigr),
&n\equiv0,\ q\equiv1\pmod3,\\[1mm]
q(q-m+1),
&n\equiv2,\ q\equiv0\pmod3,\\[1mm]
m^2+q(q-m+1),
&n\equiv2,\ q\equiv2\pmod3.
\end{cases}
\]
These are the only possible residue pairs, since $n\equiv q-2m\pmod3$ and $m\equiv0$ or $2\pmod3$.  We verify that this total is at most $3\binom q2$.  If $n\equiv0$ and $q\equiv0\pmod3$, then $\sum_i a_i\leq q\leq3\binom q2$; if $n\equiv2$ and $q\equiv0\pmod3$, then $3\binom q2-\sum_i a_i=q(q+2m-5)/2\geq0$.  Suppose $n\equiv0$ and $q\equiv1\pmod3$.  If $q<m^2$, the inequality $\sum_i a_i\leq3\binom q2$ is exactly $4m^2\leq q(3q+1)$, which is the feasibility condition for this residue class; if $q\geq m^2$, the corresponding term is zero.  Finally, suppose $n\equiv2$ and $q\equiv2\pmod3$.  The inequality $\sum_i a_i\leq3\binom q2$ is equivalent to $q^2+(2m-5)q-2m^2\geq0$.  Here $m\equiv0\pmod3$, so $m\geq6$, and $q\equiv m+2\pmod3$ together with $q\geq m$ gives $q\geq m+2$.  The left-hand side is increasing in $q$, and at $q=m+2$ it equals $m^2+3m-6>0$.

The intervals in the table are nonempty.  If $n\equiv0\pmod3$, then $(2m-q)/3\leq m/3\leq(m-1)/2$, which verifies the first and third rows, while the middle row follows from $((2m-q)/3-1)^+\leq(m/3-1)^+\leq\lfloor(m-3)/2\rfloor$.  If $n\equiv2\pmod3$, then $q>m$ and $(2m+2-q)/3\leq(m+1)/3\leq(m-1)/2$, which verifies the first and third rows, while the middle row follows from $((2m+2-q)/3-1)^+\leq((m+1)/3-1)^+\leq\lfloor(m-1)/2\rfloor$.

\noindent\textit{The $\alpha^2\beta$- and $\alpha\beta^2$-edge allocation.}  We next verify that the intervals allow the required total for the $\alpha^2\beta$-allocation.  Let $T=qm(m-1)$, the total number of $\alpha^2\beta$-edges.  If $\sum_i\iota_i>0$, direct summation gives
\[
 T-\sum_i\iota_i
 =
\begin{cases}
\displaystyle
\frac{q(q^2-m^2)-\delta}{3},
&n\equiv0\pmod3,\\[3mm]
\displaystyle
\frac{q(q-m)(q+m-2)-3\delta}{3},
&n\equiv2\pmod3.
\end{cases}
\]
For $n\equiv0\pmod3$, this difference is nonnegative because either $q=m$ and $\delta=0$, or $q>m$ and $q(q^2-m^2)\geq q\geq\delta$.  For $n\equiv2\pmod3$, it is nonnegative because $q>m$ and $q(q-m)(q+m-2)\geq q(2m-1)\geq9q\geq3\delta$.  Thus $\sum_i\iota_i\leq T$.

We next prove the stronger upper estimate
\[
 \sum_i\rho_i-T\geq2mq.
\]
Direct summation gives
\[
\sum_i\rho_i-T-2mq
=
\begin{cases}
\displaystyle
\frac{mq(q-6)}2-\frac{\delta}{3},
&n\equiv0,\ m\text{ even},\\[3mm]
\displaystyle
q\left(\frac{q(m-1)}2-3m\right)+\frac{\delta}{3},
&n\equiv0,\ m\text{ odd},\\[3mm]
\displaystyle
\frac{mq(q-8)}2,
&n\equiv2,\ m\text{ even},\\[3mm]
\displaystyle
\frac q2\bigl(q(m-1)-4m\bigr),
&n\equiv2,\ m\text{ odd}.
\end{cases}
\]
If $n\equiv0\pmod3$ and $m$ is even, then $m\geq6$.  If $q=6$, feasibility forces $m=6$, so $\delta=0$ and the first expression is $0$.  Otherwise feasibility and the congruence conditions give $q\geq9$; since $\delta\leq q$, the first expression is nonnegative.  If $n\equiv0\pmod3$ and $m$ is odd, then, because $(m,q)\ne(5,7)$, either $m=5$ and $q\geq10$, or $m\geq9$ and $q\geq m$, so the second expression is nonnegative.  If $n\equiv2\pmod3$ and $m$ is even, admissibility gives $q\geq8$, so the third expression is nonnegative.  Finally, if $n\equiv2\pmod3$ and $m$ is odd, then $q\geq m\geq5$, so the fourth expression is at least $qm(m-5)/2\geq0$.  Hence
\[
 \sum_i\iota_i\leq T\leq\sum_i\rho_i.
\]

At this stage no $\alpha^2\beta$- or $\alpha\beta^2$-edge has been colored.  We now choose their multiplicities together.  For each new color $i$, let $x_i$ denote the number of $\alpha^2\beta$-edges that will receive color $i$.  Since a new color must have degree $m$ at $\alpha$, once $x_i$ is chosen the number of $\alpha\beta^2$-edges that must receive color $i$ is forced to be
\[
 y_i=m-(\alpha\beta)_i-2x_i.
\]
Thus the intervals $[\iota_i,\rho_i]$ in the table are precisely the allowed ranges for $x_i$: the lower endpoint is chosen so that the remaining degree at $\beta$ has the required form below, while the upper endpoint guarantees $y_i\geq0$.

A choice $x=(x_i)$ is an \emph{allocation} if $\iota_i\leq x_i\leq\rho_i$ for every new color $i$, and it is \emph{complete} if
\[
 \sum_i x_i=T=qm(m-1).
\]
For every complete allocation, the forced values $y_i$ automatically have the correct total:
\[
 \sum_i y_i=mq(q+2m)-3mq-2qm(m-1)=mq(q-1),
\]
which is exactly the number of $\alpha\beta^2$-edges.  Thus a complete allocation determines all $\alpha^2\beta$- and $\alpha\beta^2$-multiplicities.  No edge of either type is colored until such an allocation has been found.

\noindent\textit{Controlling parity.}  It remains to choose a complete allocation for which the still-unfilled degree at $\beta$ can be supplied by the $\beta^2$- and $\beta^3$-edges.  Let
\[
 \theta=
 \begin{cases}
 (2m-q)/3,&n\equiv0\pmod3,\\[1mm]
 (2m+2-q)/3,&n\equiv2\pmod3.
 \end{cases}
\]
For an integer $x\in[\iota_i,\rho_i]$, put
\[
 y_i(x)=m-(\alpha\beta)_i-2x
\]
and
\[
 c_i(x)=\frac{q-(\beta)_i-(\alpha\beta)_i-x-2y_i(x)-2a_i}{3}
       =\frac13\bigl(q-(\beta)_i-2m+(\alpha\beta)_i-2a_i\bigr)+x.
\]
If $x_i=x$, then color $i$ is to receive $x$ edges of type $\alpha^2\beta$ and $y_i(x)$ edges of type $\alpha\beta^2$, so the degree still required at $\beta$ is
\[
 q-\bigl((\beta)_i+(\alpha\beta)_i+x+2y_i(x)\bigr)
 =2a_i+3c_i(x).
\]
Hence the parity of $c_i(x)$ is the only remaining issue: Lemma~\ref{lem:completion} will distribute the $\beta^2$- and $\beta^3$-edges provided that not too many of the resulting values $c_i(x_i)$ are odd.

For the first, middle, and third rows corresponding to either value of $n\pmod3$, respectively, direct substitution gives
\[
 c_i(x)=x-\theta,\qquad x-\theta+1,\qquad x-\theta,
\]
and the corresponding lower endpoints are $\theta^+$, $(\theta-1)^+$, and $\theta^+$.  Thus $c_i(x)$ is a nonnegative integer throughout $[\iota_i,\rho_i]$, and $c_i(x+1)=c_i(x)+1$.

Let $\iota_i'$ and $\rho_i'$ be the least and greatest values of $x\in[\iota_i,\rho_i]$ for which $c_i(x)$ is even.  These values always exist.  Indeed, if $\iota_i>0$, the preceding formulas give $c_i(\iota_i)=0$.  If $\iota_i=0$, then one of $c_i(0)$ and $c_i(1)$ is even, and $1\leq\rho_i$ because the smallest upper endpoint in the table is $\lfloor(m-3)/2\rfloor\geq1$.  Since the parity changes at every step,
\[
 \iota_i'\in\{\iota_i,\iota_i+1\},
 \qquad
 \rho_i'\in\{\rho_i-1,\rho_i\},
\]
and the values of $x$ for which $c_i(x)$ is even are precisely
\[
 \iota_i',\ \iota_i'+2,\ \ldots,\ \rho_i'.
\]

Put
\[
 I=\sum_i\iota_i',
 \qquad
 R=\sum_i\rho_i'.
\]
Since each $\rho_i'-\iota_i'$ is even, $I\equiv R\pmod2$.  Moreover, for every integer $S$ of this parity with $I\leq S\leq R$, there is an allocation $x$ with $\sum_i x_i=S$ and every $c_i(x_i)$ even.  Indeed, write $x_i=\iota_i'+2z_i$ with $0\leq z_i\leq(\rho_i'-\iota_i')/2$; the required excess $(S-I)/2$ can be distributed among the $z_i$ greedily.

Suppose first that $I\leq T\leq R$.  If $T\equiv I\pmod2$, choose a complete allocation for which every $c_i(x_i)$ is even.  If $T\not\equiv I\pmod2$, then $I<T<R$.  Choose an allocation with total $T-1$ for which every $c_i(x_i)$ is even.  Since $T-1<R$, some coordinate satisfies $x_i<\rho_i'$.  Both $x_i$ and $\rho_i'$ give even values of $c_i$, so in fact $x_i\leq\rho_i'-2$.  Increase this coordinate by $1$.  The resulting vector is a complete allocation, and exactly one value $c_i(x_i)$ is odd.

Suppose next that $T<I$.  If $\theta\geq1$, the preceding formulas give $c_i(\iota_i)=0$ in every row, so $\iota_i'=\iota_i$ for each new color and
\[
 I=\sum_i\iota_i\leq T,
\]
a contradiction.  Thus $\theta\leq0$.  It follows that every $\iota_i=0$ and every $\iota_i'$ is $0$ or $1$.  Thus $I\leq q(q+2m)$, the number of new colors.  Start with the allocation $x_i=\iota_i'$, whose total is $I$, and choose any $I-T$ coordinates for which $x_i=1$.  Change each of these coordinates from $1$ to $0$.  The resulting vector is a complete allocation, and exactly $I-T$ values $c_i(x_i)$ are odd.  Moreover,
\[
 I-T\leq q(q+2m)-qm(m-1)=q(q+3m-m^2)\leq q^2.
\]

Finally suppose that $T>R$.  For each $i$, let $\varepsilon_i=\rho_i-\rho_i'\in\{0,1\}$, and put $h=\sum_i\varepsilon_i$.  Since $\sum_i\rho_i=R+h$ and $T\leq\sum_i\rho_i$, we have $T-R\leq h$.  Start with the allocation $x_i=\rho_i'$, whose total is $R$.  Choose any $T-R$ indices for which $\varepsilon_i=1$, and for each of them change $x_i$ from $\rho_i'$ to $\rho_i=\rho_i'+1$.  The resulting vector is a complete allocation, and exactly $T-R$ values $c_i(x_i)$ are odd.  The estimate above gives
\[
 T-R
 =h-\left(\sum_i\rho_i-T\right)
 \leq h-2mq
 \leq q(q+2m)-2mq
 =q^2.
\]

Thus in every case there is a complete allocation $x=(x_i)$ for which at most $q^2$ of the values $c_i(x_i)$ are odd.  For each new color $i$, put
\[
 y_i=m-(\alpha\beta)_i-2x_i.
\]
As observed above, the $x_i$ and $y_i$ are nonnegative integers with
\[
 \sum_i x_i=qm(m-1),
 \qquad
 \sum_i y_i=mq(q-1).
\]
We may therefore color, for each new color $i$, exactly $x_i$ currently uncolored $\alpha^2\beta$-edges and exactly $y_i$ currently uncolored $\alpha\beta^2$-edges with $i$.  These totals use every edge of the two types exactly once.  After this coloring,
\[
 (\alpha^2\beta)_i=x_i,
 \qquad
 (\alpha\beta^2)_i=y_i,
\]
and each new color has degree $m$ at $\alpha$.

For a new color $i$, let $b_i=c_i(x_i)$.  By the preceding calculation, $r_i=q-\dg_i(\beta)=2a_i+3b_i$.

If $q\geq9$, at most $q^2$ of the new values $b_i$ are odd, and at most $m^2\leq q^2$ old values are odd.  Since $q^2\leq\binom q3$, the required parity bound follows.

Suppose now that $q<9$.  Under feasibility, with $m\geq5$ and $(m,q)\ne(5,7)$, the only possibilities are $(5,6)$, $(6,6)$, and $(6,8)$.  In all three cases $T\geq R$.  Direct summation of the even-endpoint totals above gives $T-R=18,0,64$, respectively, so the construction above has exactly that many odd new values of $b_i$.  The old colors contribute $6,6,8$ odd values, respectively.  Thus the total numbers of odd $b_i$ are $24,6,72$, which are at most $2\binom63=40$, $2\binom63=40$, and $2\binom83=112$, respectively.

It remains to treat $(m,q)=(5,7)$.  Here $\delta=0$.  Color the seven $\beta$-edges with seven old colors; for these colors take $(a_i,b_i)=(0,2)$, and for the remaining eighteen old colors take $(a_i,b_i)=(2,1)$.  For the new colors, start from the $n\equiv0\pmod3$ table and make the $\alpha\beta$-count-preserving switch $10\times3\longrightarrow6\times5+4\times0$ in the $\alpha\beta$-column.  The resulting allocation is
\[
\begin{array}{c|ccccc}
\text{number}&(\alpha\beta)_i&a_i&x_i&y_i&b_i\\ \hline
6&5&1&0&0&0\\
25&3&0&0&2&0\\
36&0&0&1&3&0\\
52&0&0&2&1&1
\end{array}
\]
Each row has $\alpha$-degree $5$, and the remaining degree needed at $\beta$ is $2a_i+3b_i$.  The four rows contain $119=n^2-m^2$ colors and use all $105=3mq$ edges of type $\alpha\beta$.  Moreover, $\sum_i x_i=140=qm(m-1)$ and $\sum_i y_i=210=mq(q-1)$, so all $\alpha^2\beta$- and $\alpha\beta^2$-edges are used.  The new colors contribute $52$ odd values of $b_i$ and the old colors contribute $18$, giving $70=2\binom73$ in total, while $\sum_i a_i=18\cdot2+6=42\leq3\binom72$.  Thus Lemma~\ref{lem:completion} applies here as well, and the proof is complete.

\end{proof}

\begin{proof}[Proof of Theorem~\ref{thm:hypergraph-embedding}]
Necessity is Lemma~\ref{lem:necessity}.  For sufficiency, the case $m=1$ is immediate.

Suppose $m=2$.  Feasibility gives $n\geq6$, so Lemma~\ref{lem:singleton-factor} provides a one-factorization of $\cc H_n$ whose singleton edges form one factor.  On any two vertices $u,v$, the three copies of $\{u,v\}$ lie in three distinct factors; together with the singleton factor they restrict to the unique one-factorization of $\cc H_2$, up to relabeling colors.  Hence the prescribed factorization of $\cc H_2$ embeds.

Finally suppose $m\geq5$.  Lemma~\ref{lem:partial-coloring} colors every edge except those of types $\beta^2$ and $\beta^3$ and produces the residual data required by Lemma~\ref{lem:completion}; that lemma colors the remaining edges.  This proves Theorem~\ref{thm:hypergraph-embedding}, and hence Theorem~\ref{thm:symmetric-cube-embedding}.
\end{proof}

\section{The order-eight $\operatorname{PSL}(2,7)$ factorization}\label{sec:psl27}

At order eight, the profiles from Lemma~\ref{lem:singleton-factor} admit a much more rigid realization under the natural action of $G=\operatorname{PSL}(2,7)$.  The unique $G$-invariant one-factorization of $K_8^2\cup K_8^3$ has factor type $2+3+3$ and extends to a $G$-invariant one-factorization of $\cc H_8$ with orbit sizes $1,7,28,28$.

\medskip
\noindent\textit{Why order eight is special.}

For $n=8$, Lemma~\ref{lem:singleton-factor} gives the three profiles $(8,0,0)$, $(0,1,2)$, and $(0,4,0)$, with multiplicities $1$, $56$, and $7$, respectively.  After detachment these correspond to factor types $1^8$, $2+3+3$, and $2+2+2+2$.

The factor type $2+3+3$ is special to order eight.  At this order,
\[
 2\binom82=\binom83=56,
\]
so each of the $28$ pairs can be accompanied by two triples while using every triple exactly once.

\medskip
\noindent\textit{The invariant $2+3+3$ factorization.}

Let
\[
 X=\mathbb P^1(\mathbb F_7)=\mathbb F_7\cup\{\infty\},
 \qquad
 G=\operatorname{PSL}(2,7)=\operatorname{SL}(2,7)/\{\pm I\}.
\]
Thus $G$ is the projective special linear group of degree two over $\mathbb F_7$.  We use the standard action of $G$ on the projective line by fractional linear transformations
\[
 x\longmapsto \frac{ax+b}{cx+d},
 \qquad
 \begin{pmatrix}a&b\\c&d\end{pmatrix}\in\operatorname{SL}(2,7),
\]
with the usual conventions at $\infty$.  A factorization will be called $G$-invariant if applying any $g\in G$ to all vertices permutes its factors.  Three particularly useful elements are
\[
 \tau_b(x)=x+b\quad(b\in\mathbb F_7),
 \qquad
 \delta(x)=2x\quad(x\in\mathbb F_7),\qquad \delta(\infty)=\infty,
 \qquad
 \rho(x)=-\frac1x.
\]
All three lie in $G$; for $\delta$ one may use the determinant-one representative $\left(\begin{smallmatrix}4&0\\0&2\end{smallmatrix}\right)$.  Here $\tau_b$ fixes $\infty$, while $\rho$ interchanges $0$ and $\infty$.  Thus $G$ is transitive on $X$, and the translations are transitive on $X\setminus\{\infty\}$.  Consequently the action is two-transitive: any ordered pair of distinct points can be sent to any other.

We will also use $|G|=168$.  Indeed, the first column of a matrix in $\operatorname{SL}(2,7)$ can be chosen in $48$ ways, and after that the second column has seven choices giving determinant one.  Hence $|\operatorname{SL}(2,7)|=336$, and quotienting by $\{\pm I\}$ gives $|G|=168$.

Put $\mathcal Q=\{1,2,4\}$ and $\mathcal N=\{3,5,6\}$.  For the base pair $p_0=\{\infty,0\}$, the subgroup fixing both points individually (the pointwise stabilizer) has order $|G|/(8\cdot7)=3$ by two-transitivity.  It contains $\delta$, which has order three, so it is precisely $\langle\delta\rangle$, with the two orbits $\mathcal Q$ and $\mathcal N$ on $X\setminus p_0$.
The element $\rho$ preserves $p_0$ as a set and interchanges $\mathcal Q$ and $\mathcal N$.  Hence the subgroup preserving $p_0$ as a set (the setwise stabilizer) is transitive on the six points outside $p_0$.  By two-transitivity, the same holds for the setwise stabilizer of every pair.  In particular, $G$ is transitive on the $3$-subsets of $X$: first send a chosen pair of one $3$-set to a chosen pair of the other, and then use the stabilizer of that pair to send the remaining point to the remaining point.

For a pair $p\in\binom X2$, let $A_p$ and $B_p$ be the two orbits of the pointwise stabilizer of $p$ on $X\setminus p$, and put
\[
 \Pi_p=\{p,A_p,B_p\}.
\]
The labels $A_p$ and $B_p$ are interchangeable.  For the base pair,
\[
 \Pi_{\{\infty,0\}}
 =\bigl\{\{\infty,0\},\{1,2,4\},\{3,5,6\}\bigr\}.
\]

\begin{theorem}\label{thm:psl233}
There is a unique $G$-invariant one-factorization of $K_8^2\cup K_8^3$ on vertex set $X$.
It consists of the $28$ factors $\Pi_p$, $p\in\binom X2$.  Thus every factor has type $2+3+3$, every pair occurs in exactly one factor, and every $3$-subset occurs in exactly one factor.
\end{theorem}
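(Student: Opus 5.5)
Two things need proving: the family $\{\Pi_p\}$ is a $G$-invariant one-factorization, and every $G$-invariant one-factorization of $K_8^2\cup K_8^3$ coincides with it.

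\emph{Existence.} By construction each $\Pi_p$ partitions $X$ into a pair and two triples, so it is a one-factor. For $g\in G$, the pointwise stabilizer of $g(p)$ is the conjugate of that of $p$ by $g$. Hence $g$ maps the orbits $A_p,B_p$ onto $A_{g(p)},B_{g(p)}$, and $g(\Pi_p)=\Pi_{g(p)}$, which gives invariance. Each pair $p$ lies in exactly one factor, namely $\Pi_p$, since the pair edge of $\Pi_{p'}$ is $p'$ itself. There are $28$ factors, each containing two triples, giving $56=\binom83$ triple slots. So it suffices to show that every $3$-set occurs in some factor. By $G$-invariance of the family and the transitivity of $G$ on $3$-subsets established above, it is enough that one triple occurs, and $\{1,2,4\}$ occurs in $\Pi_{\{\infty,0\}}$. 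Counting then forces each triple to occur exactly once.

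\emph{Uniqueness.} Let $\mathcal F$ be a $G$-invariant one-factorization. A factor partitions $8$ points into pairs and triples, so its type is $2^4$, $2+3+3$, or $2+2+\ldots$ with a single triple. The last is impossible, since $8-3=5$ is odd. So every factor has type $2^4$ or $2+3+3$. Let $k$ factors have type $2+3+3$. Counting triples gives $2k=56$, so $k=28$, and those factors use $28$ pair edges. Since there are only $28$ pairs, no factor of type $2^4$ remains, and every pair lies in exactly one factor, together with two triples. Write $F_p$ for the factor containing $p$. Invariance gives $g(F_p)=F_{g(p)}$. In particular the pointwise stabilizer $\langle\delta\rangle$ of $p_0$ fixes $F_{p_0}$, so it permutes the two triples of $F_{p_0}$. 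As $\delta$ has order $3$, it cannot swap them, so it fixes each one. A $3$-subset of $X\setminus p_0$ fixed by $\delta$ is a union of $\delta$-orbits, and those orbits are $\mathcal Q$ and $\mathcal N$, each of size $3$. Hence the triples of $F_{p_0}$ are $\mathcal Q$ and $\mathcal N$, so $F_{p_0}=\Pi_{p_0}$. Because $G$ is transitive on pairs and $F_{g(p_0)}=g(\Pi_{p_0})=\Pi_{g(p_0)}$, we get $F_p=\Pi_p$ for all $p$.

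The main point needing care is the claim that the pointwise stabilizer of $p_0$ is exactly $\langle\delta\rangle$ with orbits $\mathcal Q,\mathcal N$. The text before the theorem already gives this: the stabilizer has order $168/56=3$ by two-transitivity, and $\delta$ has order $3$ with the stated orbits. So the remaining work is bookkeeping: ruling out other factor types by parity and counting, and checking that $\Pi_p$ is well defined independent of the labelling of $A_p,B_p$.
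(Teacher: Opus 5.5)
Your proposal is correct and follows essentially the same route as the paper: conjugating pair stabilizers gives invariance, transitivity on $3$-subsets together with the count $56=\binom83$ gives existence, and the order-three stabilizer $\langle\delta\rangle$ fixing both triples of the factor through $p_0$ gives uniqueness. The only cosmetic difference is how you exclude $2+2+2+2$ factors. You count triples ($2k=56$), whereas the paper first gets $28$ factors from the vertex degree $28$ and then counts pair edges.
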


\begin{proof}
The family $\{\Pi_p:p\in\binom X2\}$ is $G$-invariant: if $g\in G$, then $g$ sends the subgroup fixing $p$ pointwise to the subgroup fixing $gp$ pointwise, and hence sends the two $3$-point orbits for $p$ to those for $gp$.  Every pair occurs once.  Since $G$ is transitive on the $3$-subsets of $X$, every triple occurs the same number of times among the $3$-parts of the $\Pi_p$.  There are $28\cdot2=56=\binom83$ such occurrences, so every triple occurs exactly once.  This proves existence.

For uniqueness, let $\mathcal F$ be any $G$-invariant one-factorization of $K_8^2\cup K_8^3$.  Every vertex of this hypergraph has degree $7+\binom72=28$, so $\mathcal F$ has $28$ factors.  A partition of eight points using only parts of size two and three has type either $2+3+3$ or $2+2+2+2$.  If $x$ factors had the second type, then counting the $\binom82=28$ pair edges gives $4x+(28-x)=28$, and hence $x=0$.  Thus every factor has type $2+3+3$ and contains a unique pair.

Let $F_0$ be the factor containing $p_0=\{\infty,0\}$.  The subgroup $\langle\delta\rangle$ must preserve $F_0$, because it fixes the unique pair in that factor.  It acts on the two $3$-parts of $F_0$; a group of order three cannot interchange two objects, so it fixes both parts setwise.  The two $\langle\delta\rangle$-orbits on $X\setminus p_0$ are $\mathcal Q$ and $\mathcal N$, and therefore $F_0=\Pi_{p_0}$.  Since $G$ is transitive on pairs and $\mathcal F$ is $G$-invariant, the factor containing every pair $p$ is then forced to be $\Pi_p$.
\end{proof}

Thus the $2+3+3$ structure is forced directly by the pair stabilizers; no orientation of the triples is needed to construct it.

\medskip
\noindent\textit{Orienting the triples.}

We now orient the triples in the canonical factorization.  Write $\chi:\mathbb F_7^\times\to\{\pm1\}$ for the quadratic character, so that $\chi(a)=1$ when $a$ is a square in $\mathbb F_7^\times$ and $\chi(a)=-1$ otherwise.  Represent $t\in\mathbb F_7$ and $\infty$ by
\[
 u_t=\binom t1,
 \qquad
 u_\infty=\binom10.
\]
For distinct $x,y,z\in X$, define
\[
 \varepsilon(x,y,z)
 =\chi\!\left(
 \det(u_x,u_y)\det(u_y,u_z)\det(u_z,u_x)
 \right).
\]

\begin{lemma}\label{lem:psl-sign}
The sign $\varepsilon(x,y,z)$ is well defined and $G$-invariant.  It is preserved by cyclic permutations of $x,y,z$ and reversed by every transposition.
\end{lemma}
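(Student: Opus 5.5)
The plan is to reduce everything to two facts about determinants. First, a projective point $x$ has many representing vectors: any $\lambda u_x$ with $\lambda\in\mathbb F_7^\times$. Second, $\det(gv,gw)=\det(v,w)$ for $g\in\operatorname{SL}(2,7)$. The number inside $\chi$ is nonzero for distinct $x,y,z$, because distinct projective points have linearly independent representatives. We then check scaling, the group action, and permutations in turn.

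\emph{Well-definedness.} Replace $u_x,u_y,u_z$ by $\lambda u_x,\mu u_y,\nu u_z$. Each vector appears in exactly two of the three determinants, so the product is multiplied by $\lambda^2\mu^2\nu^2$. This factor is a nonzero square, so $\chi$ of the product is unchanged. Hence $\varepsilon$ depends only on the points, not on the chosen representatives. Replacing a matrix by its negative also does not matter, since the value is computed through arbitrary representatives.

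\emph{$G$-invariance.} Take $g\in\operatorname{SL}(2,7)$. The vector $gu_x$ is some representative of the point $g(x)$; one checks this also holds at $\infty$, using the usual convention. Since $\det(gu_x,gu_y)=\det(g)\det(u_x,u_y)=\det(u_x,u_y)$, the product is unchanged. Combined with well-definedness, this gives $\varepsilon(gx,gy,gz)=\varepsilon(x,y,z)$. The sign of $g$ is irrelevant, so the statement descends to $G$.

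\emph{Permutations.} A cyclic shift $x\to y\to z\to x$ permutes the three factors $\det(u_x,u_y)$, $\det(u_y,u_z)$, $\det(u_z,u_x)$ cyclically, so the product is unchanged. The transposition $x\leftrightarrow y$ turns the product into $\det(u_y,u_x)\det(u_x,u_z)\det(u_z,u_y)$. Each factor is the negative of an original factor, because $\det$ is alternating, so the product is multiplied by $(-1)^3=-1$. Since $7\equiv3\pmod4$, $\chi(-1)=-1$, and therefore $\varepsilon$ changes sign. Every other transposition is a cyclic conjugate of this one, so the same holds for all of them.

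The only delicate point is this last step. Sign reversal depends on $-1$ being a nonsquare mod $7$, and that fact must be stated explicitly.
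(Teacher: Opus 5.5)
Your proof is correct and follows essentially the same route as the paper: the quadratic character is unchanged under the square factor $(\lambda\mu\nu)^2$, $\operatorname{SL}(2,7)$ preserves determinants, a cyclic shift cycles the three determinant factors, and a transposition contributes $(-1)^3$ with $\chi(-1)=-1$. You also make explicit two points the paper leaves implicit, namely that the determinant product is nonzero for distinct points and that $gu_x$ represents $g(x)$, including at $\infty$.
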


\begin{proof}
Replacing $u_x,u_y,u_z$ by $au_x,bu_y,cu_z$ multiplies the determinant product by $(ab)(bc)(ca)=(abc)^2$, so its quadratic character is unchanged.  If $A\in\operatorname{SL}(2,7)$, then $\det(Au,Av)=\det(A)\det(u,v)=\det(u,v)$, which proves $G$-invariance.  A cyclic permutation cycles the three determinant factors.  Interchanging two coordinates multiplies their product by $-1$, and $\chi(-1)=-1$ because $-1=6\in\mathcal N$.
\end{proof}

For finite $x,y,z$ and for triples containing $\infty$, respectively,
\[
 \varepsilon(x,y,z)=\chi((x-y)(y-z)(z-x)),
 \qquad
 \varepsilon(\infty,y,z)=\chi(z-y).
\]
Thus, relative to the ordered pair $(\infty,0)$, the positive third points are $\mathcal Q$ and the negative third points are $\mathcal N$.  By two-transitivity and $G$-invariance, for every pair $p=\{x,y\}$ the two sign classes on $X\setminus p$ are exactly the two blocks $A_p,B_p$ of the canonical factorization.  The sign therefore does not create the $2+3+3$ factorization; it makes a consistent choice of one of the two cyclic orientations of every triple.

For a $3$-set $T$, its three cyclic orderings of positive sign constitute its positive cyclic orientation, and the other three its negative cyclic orientation.  Label the two copies of $T$ in $2K_8^3$ by $T^+$ and $T^-$ accordingly.  Label the three copies of a pair $p$ in $3K_8^2$ by $p^{(1)},p^{(2)},p^{(3)}$.  If the two $3$-parts of $\Pi_p$ are $T_p$ and $U_p$, define
\[
 F_p^+=\{p^{(1)},T_p^+,U_p^+\},
 \qquad
 F_p^-=\{p^{(2)},T_p^-,U_p^-\}.
\]
By Theorem~\ref{thm:psl233}, the $28$ factors $F_p^+$ use every first pair copy and every positive triple copy once, while the $28$ factors $F_p^-$ use every second pair copy and every negative triple copy once.  Each family is a single $G$-orbit, indexed by the $28$ pairs of $X$.

\medskip
\noindent\textit{Completing $\cc H_8$.}

For the remaining pair copy we need a $G$-invariant one-factorization of $K_8$.  For $t\in\mathbb F_7$, put
\[
 M_t=\{\{\infty,t\}\}
 \cup
 \{\{t+a,t+3a\}:a\in\mathcal Q\}.
\]
Since $3\mathcal Q=\mathcal N$, each $M_t$ is a perfect matching.  These seven matchings factor $K_8$.  Indeed, for a finite edge $\{x,y\}$, exactly one of $y-x$ and $x-y$ lies in $\mathcal Q$ because $-1=6\in\mathcal N$.  Order its endpoints so that $y-x\in\mathcal Q$.  Then $a=(y-x)/2\in\mathcal Q$ and $t=x-a$ are uniquely determined and give $\{x,y\}=\{t+a,t+3a\}$.

\begin{lemma}\label{lem:psl-matchings}
The seven matchings $M_t$, $t\in\mathbb F_7$, form a single $G$-orbit.  In particular, their one-factorization of $K_8$ is $G$-invariant.
\end{lemma}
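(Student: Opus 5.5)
The plan is to verify invariance on a generating set of $G$, and to obtain transitivity from the translations alone. I will use the standard fact that $\operatorname{PSL}(2,p)$ is generated by $x\mapsto x+1$ and $x\mapsto-1/x$. Thus $G=\langle\tau_1,\rho\rangle$, and it suffices to show that $\tau_1$ and $\rho$ each permute the set $\mathcal M=\{M_t:t\in\mathbb F_7\}$. For transitivity, note that
\[
\tau_b(M_t)=\{\{\infty,t+b\}\}\cup\{\{t+b+a,t+b+3a\}:a\in\mathcal Q\}=M_{t+b}.
\]
So the translations act on $\mathcal M$ as the regular action of $\mathbb F_7$, and $\mathcal M$ lies inside a single orbit. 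Once $G$ is known to preserve $\mathcal M$, the set $\mathcal M$ is exactly one $G$-orbit.

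Before the main step I will record that $\delta$ also permutes $\mathcal M$. Since $2\mathcal Q=\mathcal Q$,
\[
\delta(M_t)=\{\{\infty,2t\}\}\cup\{\{2t+2a,2t+6a\}:a\in\mathcal Q\}=M_{2t}.
\]
Strictly this is not needed for generation, but it cuts the later checks: $\delta$ fixes $M_0$ and permutes $M_1,\dots,M_6$ in two orbits $\{M_1,M_2,M_4\}$ and $\{M_3,M_6,M_5\}$.

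The main step is $\rho$. First, a direct computation gives $M_0=\{\{\infty,0\},\{1,3\},\{2,6\},\{4,5\}\}$. Using $-1/1=6$, $-1/3=2$, $-1/2=3$, $-1/6=1$, $-1/4=5$, $-1/5=4$, one checks that $\rho(M_0)=M_0$. For $t\neq0$ I will use the fact that the $M_s$ factor $K_8$. A perfect matching can lie in $\mathcal M$ only as the unique $M_s$ containing its edge through $\infty$, and $\{\infty,s\}\in M_s$. So $\rho(M_t)$ can only be $M_s$ where $\{\infty,s\}=\rho(e)$ and $e$ is the edge of $M_t$ containing $0=\rho(\infty)$. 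It then remains to compare the other three edges.

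The conjugation relation $\rho\delta\rho^{-1}=\delta^{-1}$ can be checked pointwise, e.g. both sides send $x\mapsto 4x$ on $\mathbb F_7$. Hence $\rho\delta=\delta^{-1}\rho$, which gives
\[
\rho(M_{2t})=\rho\delta(M_t)=\delta^{-1}\rho(M_t).
\]
Therefore it suffices to check $\rho(M_1)$ and $\rho(M_3)$, one representative from each $\delta$-orbit. For example, $M_1=\{\{\infty,1\},\{2,4\},\{3,0\},\{5,6\}\}$ contains the edge $\{0,3\}$, so the candidate is $\rho(M_1)=M_2$, since $\rho(3)=2$. I expect this explicit verification of $\rho$ on $M_1$ and $M_3$ to be the only real obstacle, though it is a routine finite computation in $\mathbb F_7$. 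If an unexpected mismatch appeared, I would fall back on computing the stabilizer of $M_0$: it contains $\langle\delta,\rho\rangle$, and I would show it has order $24$, so that the orbit of $M_0$ has size $168/24=7$ and, containing all translates, equals $\mathcal M$.
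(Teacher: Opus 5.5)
Your proposal is correct, but it takes a different route from the paper. Both arguments use the translations to place all seven $M_t$ in the orbit of $M_0$. They differ in how they show that the orbit is no larger.

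\emph{The paper's route.} It never uses generators of $G$. It bounds the orbit by orbit--stabilizer. It exhibits $\delta$ and the element $\sigma$ given by $\left(\begin{smallmatrix}2&1\\2&5\end{smallmatrix}\right)$ in the stabilizer of $M_0$. These induce a $3$-cycle and a transposition on the four edges of $M_0$, and together they generate $S_4$. Hence the stabilizer has order at least $24$, and $|G\cdot M_0|\le 168/24=7$.

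\emph{Your route.} You check that $\mathcal M$ is closed under the generating set $\{\tau_1,\rho\}$, and the relation $\rho\delta\rho^{-1}=\delta^{-1}$ cuts the $\rho$-check to $M_0,M_1,M_3$. The computations you left pending do close:
\[
\rho(M_1)=\{\{\infty,2\},\{0,6\},\{3,5\},\{1,4\}\}=M_2,\qquad \rho(M_3)=\{\{\infty,6\},\{0,2\},\{1,5\},\{3,4\}\}=M_6.
\]

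\emph{What each buys.} Your argument is a direct, mechanical invariance check. It needs neither $|G|=168$ nor the $S_4$-generation step. However, it imports the fact that $\operatorname{PSL}(2,p)=\langle x\mapsto x+1,\ x\mapsto -1/x\rangle$, which the paper never proves. The paper's argument is self-contained once $|G|=168$ is established. It also yields, as a by-product, that the stabilizer of $M_0$ has order exactly $24$ and acts as $S_4$ on its edges.

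One caution about your fallback: $\langle\delta,\rho\rangle$ has order only $6$. It is $S_3$, because $\rho\delta\rho^{-1}=\delta^{-1}$. So reaching a stabilizer of order $24$ would require an additional element, such as the paper's $\sigma$.
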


\begin{proof}
The translations $\tau_b$ send $M_t$ to $M_{t+b}$, so all seven matchings lie in the orbit of
\[
 M_0=\bigl\{\{\infty,0\},\{1,3\},\{2,6\},\{4,5\}\bigr\}.
\]
Write these four edges as $e_0,e_1,e_2,e_3$ in the displayed order.  Besides $\delta$, the determinant-one matrix $\left(\begin{smallmatrix}2&1\\2&5\end{smallmatrix}\right)$ gives an element $\sigma\in G$ that interchanges $\infty$ with $1$, $0$ with $3$, $2$ with $6$, and $4$ with $5$.  Thus, on the four edges of $M_0$, $\delta$ fixes $e_0$ and cycles $e_1,e_2,e_3$, while $\sigma$ swaps $e_0,e_1$ and fixes $e_2,e_3$.  These two permutations generate all permutations of the four edges.  Hence the stabilizer of $M_0$ induces $S_4$ on those edges and therefore has at least $24$ elements.  Orbit--stabilizer gives $|G\cdot M_0|\leq168/24=7$.  The seven translates $M_t$ are distinct, so they are exactly the orbit of $M_0$.
\end{proof}

For $t\in\mathbb F_7$, put $F_t=\{p^{(3)}:p\in M_t\}$ and $F_*=\{\{x\}:x\in X\}$.

\begin{theorem}\label{thm:psl-cube}
The factors
\[
 \{F_p^+:p\in\tbinom X2\}
 \cup
 \{F_p^-:p\in\tbinom X2\}
 \cup
 \{F_t:t\in\mathbb F_7\}
 \cup\{F_*\}
\]
form a $G$-invariant one-factorization of $\cc H_8=K_8^1\cup3K_8^2\cup2K_8^3$.  Its $64$ factors form four $G$-orbits with sizes and profiles
\[
\begin{array}{c|c|c}
\text{orbit size}&\text{profile}&\text{factor type}\\ \hline
1&(8,0,0)&1^8\\
7&(0,4,0)&2+2+2+2\\
28&(0,1,2)&2+3+3\\
28&(0,1,2)&2+3+3.
\end{array}
\]
\end{theorem}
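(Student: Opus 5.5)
We need to check four things: every edge of $\cc H_8$ is used exactly once, every factor is a one-factor, the family is $G$-invariant, and the orbit sizes and profiles are as stated.

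\emph{Each factor is a one-factor.} $F_*$ is the set of all eight singletons. Each $F_t$ is the perfect matching $M_t$ on third pair copies. For every pair $p$, the parts $p$, $T_p$, $U_p$ of $\Pi_p$ partition $X$, so $F_p^\pm$ have type $2+3+3$.

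\emph{Exact cover.} The $8$ singletons lie in $F_*$. The first pair copies are used exactly once by the $F_p^+$, and the second copies by the $F_p^-$. The third copies are used once by the $F_t$, since the $M_t$ factor $K_8$. By Theorem~\ref{thm:psl233}, every triple $T$ is a $3$-part of exactly one $\Pi_p$. Hence $T^+$ lies only in that $F_p^+$ and $T^-$ only in that $F_p^-$. The count is $1+7+28+28=64=8^2$ factors, as required.

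\emph{$G$-invariance.} This is the substantive step. An element $g\in G$ acts on edge copies by moving the underlying set and keeping the label. This is well defined on triples because Lemma~\ref{lem:psl-sign} shows that $g$ preserves $\varepsilon$ and so sends positive cyclic orientations to positive ones, i.e.\ $gT^+=(gT)^+$. Trivially $g(p^{(k)})=(gp)^{(k)}$.
\begin{itemize}
\item From the proof of Theorem~\ref{thm:psl233}, $g\Pi_p=\Pi_{gp}$, so $\{gT_p,gU_p\}=\{T_{gp},U_{gp}\}$. Therefore $gF_p^{\pm}=F_{gp}^{\pm}$.
\item By Lemma~\ref{lem:psl-matchings}, $gF_t=F_{t'}$ for some $t'$.
\item $gF_*=F_*$.
\end{itemize}
So $G$ permutes the factors.

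\emph{Orbits and profiles.} The family $\{F_p^+\}$ is a single orbit of size $28$ because $G$ is transitive on pairs and $p$ is recovered from $F_p^+$ as its unique pair edge. The same holds for $\{F_p^-\}$, and the two families are distinct orbits since their pair labels differ. The $F_t$ form one orbit of size $7$ by Lemma~\ref{lem:psl-matchings}, and $F_*$ is fixed. The profiles $(8,0,0)$, $(0,4,0)$, $(0,1,2)$, $(0,1,2)$ can be read off directly.

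The only step needing care is that $G$ respects the $\pm$ labels on triples; Lemma~\ref{lem:psl-sign} supplies exactly this.
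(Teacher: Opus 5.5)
Your proposal is correct and follows the paper's proof. The exact cover comes from Theorem~\ref{thm:psl233} and Lemma~\ref{lem:psl-matchings}, and $G$-invariance and the orbit structure come from $g\Pi_p=\Pi_{gp}$, the $G$-invariance of $\varepsilon$ (Lemma~\ref{lem:psl-sign}), and transitivity on pairs. You are somewhat more explicit than the paper about the action on labelled copies, $g(p^{(k)})=(gp)^{(k)}$ and $gT^{\pm}=(gT)^{\pm}$, and this is exactly what puts $F_p^+$ and $F_p^-$, which have the same underlying vertex sets, in two distinct orbits.
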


\begin{proof}
The two $28$-element families exhaust the first two pair copies and the two oriented triple copies by Theorem~\ref{thm:psl233}.  The seven $F_t$ exhaust the third pair copy by Lemma~\ref{lem:psl-matchings}, and $F_*$ exhausts the singleton edges.  Hence the displayed factors form a one-factorization of $\cc H_8$.

The $G$-invariance of the canonical $2+3+3$ factorization and of the sign shows that $G$ permutes the factors $F_p^+$ among themselves and the factors $F_p^-$ among themselves; since $G$ is transitive on pairs, each family is a $28$-element orbit.  Lemma~\ref{lem:psl-matchings} gives the $7$-element orbit, while $F_*$ is fixed.  The profiles are immediate from the edge sizes in each factor.
\end{proof}

Under the cube--factorization correspondence, this one-factorization determines a symmetric layer-rainbow Latin cube of order $8$ admitting the diagonal action $g:(x,y,z)\mapsto(gx,gy,gz)$ on coordinates, together with the induced action on its $64$ symbols.

Thus the profile multiplicities $1,7,56$ supplied abstractly by Lemma~\ref{lem:singleton-factor} refine under $G$ to orbit sizes $1,7,28,28$; in particular, the $56$ factors of profile $(0,1,2)$ split into two orbits of size $28$.

Chen and Lu~\cite{ChenLu2017} constructed a $\operatorname{PSL}(2,7)$-symmetric factorization of $K_8^3$ into seven spanning $3$-regular factors, each isomorphic to $K_4^3\cup K_4^3$.  Their factorization is not a one-factorization.  Here the same group instead preserves a one-factorization of the non-uniform hypergraph $\cc H_8$, with factor-orbit sizes $1,7,28,28$.

One factor can be seen directly in the cube.  For $p=\{\infty,0\}$, the first pair copy corresponds to the two cells $\{(\infty,\infty,0),(0,0,\infty)\}$.  Since $\varepsilon(1,4,2)=\varepsilon(3,6,5)=1$, the corresponding positive factor occupies
\[
 \begin{gathered}
 (\infty,\infty,0),(0,0,\infty),\\
 (1,4,2),(4,2,1),(2,1,4),\\
 (3,6,5),(6,5,3),(5,3,6).
 \end{gathered}
\]
The three underlying vertex sets $\{\infty,0\}$, $\{1,2,4\}$, and $\{3,5,6\}$ partition $X$, so every coordinate value occurs exactly once in each coordinate position.

\medskip
\noindent\textit{Related classical structures.}

Put
\[
 R_\pm=\{(x,y,z):x,y,z\text{ distinct and }\varepsilon(x,y,z)=\pm1\}.
\]

\begin{lemma}\label{lem:psl-triple-orbits}
The sets $R_+$ and $R_-$ are exactly the two $G$-orbits on ordered triples of distinct points.  For every ordered pair $x\ne y$, exactly three choices of $z$ lie in each orbit.
\end{lemma}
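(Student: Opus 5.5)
We will show that $R_+$ and $R_-$ are each $G$-invariant, each is a single $G$-orbit, and together they contain all $8\cdot7\cdot6=336$ ordered triples of distinct points. Invariance is immediate from Lemma~\ref{lem:psl-sign}: since $\varepsilon$ is $G$-invariant, $g$ maps $R_+$ into $R_+$ and $R_-$ into $R_-$. Also $\varepsilon$ takes only the values $\pm1$, so $R_+\cup R_-$ is the set of all ordered triples of distinct points and the union is disjoint.

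The second assertion will be proved first, since it gives the orbit sizes. By two-transitivity and $G$-invariance, it suffices to treat the ordered pair $(\infty,0)$. From the formula $\varepsilon(\infty,y,z)=\chi(z-y)$ we get $\varepsilon(\infty,0,z)=\chi(z)$. Hence the positive third points are exactly $\mathcal Q$ and the negative ones are exactly $\mathcal N$, each of size three. Consequently $|R_+|=|R_-|=56\cdot3=168$.

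For transitivity, we use the pointwise stabilizer $\langle\delta\rangle$ of $\{\infty,0\}$, which is transitive on $\mathcal Q$ and on $\mathcal N$, as noted before Theorem~\ref{thm:psl233}. Take two triples $(x,y,z),(x',y',z')\in R_+$. Two-transitivity gives $g\in G$ with $g(x,y)=(x',y')$. By invariance, $g z$ lies among the three positive third points for $(x',y')$, as does $z'$. Conjugating to the base pair $(\infty,0)$, where the positive third points form the single $\langle\delta\rangle$-orbit $\mathcal Q$, we find an element $h$ of the pointwise stabilizer of $(x',y')$ with $hgz=z'$. The same argument applies to $R_-$ with $\mathcal N$ in place of $\mathcal Q$.

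A shortcut is also available. The stabilizer of an ordered triple of distinct points is contained in the stabilizer of the first two points, which has order $3$ and acts fixed-point-freely on the other six points. So the stabilizer of the triple is trivial, every orbit on ordered triples has size $168$, and there are exactly $336/168=2$ orbits, which must be $R_+$ and $R_-$. No step here is a real obstacle; the only point needing care is the identification of the sign classes for $(\infty,0)$ with $\mathcal Q$ and $\mathcal N$, and that is already computed in the text.
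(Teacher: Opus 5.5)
Your proof is correct, but your main argument takes a different route from the paper's.

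The paper first shows that $G$ acts with trivial stabilizers on ordered triples of distinct points, by checking directly that a fractional linear map fixing three points is the identity. Hence every orbit has size $|G|=168$, and there are exactly $336/168=2$ orbits. Since the sign classes are $G$-invariant and interchanged by a transposition, they are exactly those two orbits. The count of three third points per ordered pair then follows as $168/(8\cdot7)$.

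You work in the opposite order. You get the count first, from the explicit evaluation $\varepsilon(\infty,0,z)=\chi(z)$ transported by two-transitivity. You then prove transitivity on each sign class constructively: a two-transitive move followed by a suitable conjugate of $\delta$. This uses that the $\langle\delta\rangle$-orbits $\mathcal Q$ and $\mathcal N$ at the base pair coincide with the two sign classes there.

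What each approach buys:
\begin{itemize}
\item Your route needs neither $|G|=168$ nor the fixed-point computation, only the explicit elements $\tau_b,\rho,\delta$ and the sign formula.
\item The paper's route shows outright that the action on ordered triples of distinct points is regular, and it never has to identify the sign classes at a particular pair.
\end{itemize}

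Your ``shortcut'' is essentially the paper's counting argument. The one difference is how you show the triple stabilizer is trivial: you derive it from the pair stabilizer $\langle\delta\rangle$ having order $3$ with orbits of size $3$, which does rely on $|G|=168$. The paper instead uses the M\"obius fixed-point computation.
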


\begin{proof}
A projective transformation fixing three distinct points is the identity.  Indeed, if one fixed point is $\infty$, then $c=0$, so the map has the form $x\mapsto ux+v$; two further fixed points force $u=1$ and $v=0$.  If all three fixed points are finite, the equation $\frac{at+b}{ct+d}=t$ gives a quadratic polynomial with three distinct roots and hence the zero polynomial.  Thus the stabilizer of an ordered triple is trivial, so every orbit has size $|G|=168$.  There are $8\cdot7\cdot6=336$ ordered triples of distinct points.  By Lemma~\ref{lem:psl-sign}, the two sign classes are invariant and are interchanged by a transposition, so they are precisely the two orbits.  Two-transitivity then gives $168/(8\cdot7)=3$ third points in each orbit for every ordered pair.
\end{proof}

The oriented triple structure above has several familiar descriptions, all arising from the same two relations $R_+$ and $R_-$.  Under the action $g(x,y,z)=(gx,gy,gz)$, the triples in $X^3$ fall into six orbits: one in which all three coordinates are equal, three according to which two coordinates are equal, and the two relations $R_+$ and $R_-$ on triples of distinct points.  In the terminology of Mesner and Bhattacharya~\cite{MesnerBhattacharya1990}, these six relations form an association scheme on triples (AST).  Lemma~\ref{lem:psl-sign} says that every $3$-cycle preserves $R_+$ and $R_-$ separately, whereas every transposition interchanges them; this is the skew-symmetric two-class case.

Equivalently, choose from each $3$-set its positive cyclic orientation.  A skew two-graph is such a choice satisfying a four-point parity condition.  The present choice is regular in the sense that every ordered pair has exactly three positive third points.  For distinct $x,y,z,w$,
\[
 \varepsilon(x,y,z)\varepsilon(x,w,y)
 \varepsilon(x,z,w)\varepsilon(y,w,z)=1.
\]
Indeed, inside the quadratic character every unordered pair among the four points occurs twice, so the determinant product is, up to $(-1)^6=1$, the square $\prod_{\{a,b\}\subseteq\{x,y,z,w\}}\det(u_a,u_b)^2$.  Thus an even number of the four induced cyclic orientations have positive sign, exactly the skew-two-graph parity rule.

Finally fix $\infty$.  For $y,z\in\mathbb F_7$, define the Paley tournament by
\[
 y\longrightarrow z
 \quad\Longleftrightarrow\quad
 (\infty,y,z)\in R_+
 \quad\Longleftrightarrow\quad
 z-y\in\mathcal Q,
\]
where the second equivalence follows from $\varepsilon(\infty,y,z)=\chi(z-y)$.
For each $y$, the vertices to which $y$ points are exactly the elements of $y+\mathcal Q$.  The six ordered differences between distinct elements of $\mathcal Q$ are the six nonzero elements of $\mathbb F_7$, each once.  Hence every pair of points lies in exactly one translate $y+\mathcal Q$, so the seven triples $\{y+\mathcal Q:y\in\mathbb F_7\}$ are the lines of the Fano plane.  The same difference calculation shows directly that any two vertices of the tournament have exactly one common vertex to which both point.

Thus the AST, skew two-graph, Paley tournament, and Fano plane are not separate constructions being added to the cube.  They are four classical descriptions of the same oriented $\operatorname{PSL}(2,7)$ structure.  The feature specific to the present setting is that this structure sits inside the unique invariant $2+3+3$ factorization and hence inside the $G$-invariant one-factorization of $\cc H_8$ with orbit decomposition $1+7+28+28$.

We now return to arbitrary feasible embeddings and examine the structure forced when either bound in Theorem~\ref{thm:symmetric-cube-embedding} is attained.

\section{Rigidity at the sharp boundaries}

The results in this section have a simple interpretation in the cube.  When the bound in the case $q=n-m\equiv1\pmod3$ is attained, every cell of the new $q\times q\times q$ subcube with at least two equal coordinates contains an old symbol.  At the other sharp case $q=m$, the old symbols fill the entire new $m\times m\times m$ subcube, so this opposite subcube is itself a symmetric layer-rainbow Latin cube of order $m$ on the old symbols.  Moreover, every new symbol occurs equally often in the three mixed regions having two old and one new coordinate, taken together, as in the three mixed regions having one old and two new coordinates, taken together.

Suppose $q=n-m\equiv1\pmod3$, and consider an embedding of an order-$m$ cube into one of order $n=m+q$.  For an old color $i$, let $s_i,p_i,t_i$ be the numbers of singleton, pair, and triple edges added on the new $q$-set.  Thus $s_i+2p_i+3t_i=q$.  Put
\[
 \Delta=q(3q+1)-4m^2.
\]
By the embedding theorem, $\Delta\geq0$.  If $q=1$, feasibility forces $m=1$; this case is trivial.  Hence in the remainder of this discussion we assume $q\geq4$.

\begin{theorem}\label{thm:defect-stability}
Suppose a one-factorization of $\cc H_m$ embeds in one of $\cc H_n$, put $q=n-m\equiv1\pmod3$, and assume $q\geq4$.  Let $\Delta=q(3q+1)-4m^2$.  With at most $\lfloor\Delta/6\rfloor$ exceptions, the $m^2$ old colors have one of the two profiles
\[
 \left(1,0,\frac{q-1}{3}\right),
 \qquad
 \left(0,2,\frac{q-4}{3}\right).
\]
If $\Delta=0$, exactly $q$ old colors have the first profile and exactly $m^2-q$ have the second.  In this case the old colors use every singleton edge and every pair edge whose vertices lie in the new set.
\end{theorem}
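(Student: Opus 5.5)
This is a weighted counting argument that refines the necessity proof in Lemma~\ref{lem:necessity}. For each old color $i$ we have $s_i+2p_i+3t_i=q$ with $q\equiv1\pmod3$. Hence $s_i+2p_i\equiv1\pmod3$.

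The two displayed profiles are exactly the minimal solutions of this congruence: $(s_i,p_i)=(1,0)$ and $(s_i,p_i)=(0,2)$. The plan is to assign each old color a ``cost'' in the scarce resources, namely singleton edges (there are $q$) and pair edges on the new set (there are $3\binom q2$). We then show that every non-minimal color wastes a definite amount, measured against the slack $\Delta$.

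Concretely, I would use the weight $w_i=\tfrac{3q+1}{4}\,s_i+\tfrac12\,p_i$, or more simply work with $4\cdot$(cost). The idea is that a singleton is worth one color and two pairs are worth one color. The global constraints are $\sum_i s_i\le q$ and $\sum_i p_i\le 3\binom q2$. With the linear functional
\[
 \Phi=\sum_i\bigl(4s_i+2p_i\bigr)\le 4q+6\binom q2=q(3q+1),
\]
each color of the first or second profile contributes exactly $4$. So the total contribution of the minimal colors is $4$ times their number.

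Next I would check the case analysis on $(s_i,p_i)$ with $s_i+2p_i\equiv1\pmod3$ and $(s_i,p_i)\notin\{(1,0),(0,2)\}$. Every such color should have $4s_i+2p_i\ge 10$, that is, excess at least $6$ over $4$. The small cases are:
\begin{itemize}
\item $s_i=0$: then $p_i\equiv2\pmod3$ and $p_i\ge5$, giving $4s_i+2p_i\ge10$;
\item $s_i=1$: then $p_i\equiv0\pmod3$ and $p_i\ge3$, giving at least $10$;
\item $s_i=2$: then $p_i\equiv1\pmod3$ and $p_i\ge1$, giving at least $10$;
\item $s_i=3$: then $p_i\equiv2\pmod3$ and $p_i\ge2$, giving at least $16$;
\item $s_i\ge4$: at least $16$.
\end{itemize}

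Summing gives $4m^2+6E\le\Phi\le q(3q+1)$, where $E$ is the number of exceptional colors. Hence $E\le\lfloor\Delta/6\rfloor$.

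For the case $\Delta=0$ we get $E=0$, and equality holds throughout: $\sum s_i=q$ and $\sum p_i=3\binom q2$. So every singleton edge and every new-set pair edge is used by old colors. Write $x$ and $y$ for the numbers of colors of the first and second profiles. Then $x=\sum s_i=q$, and the number of second-profile colors is $m^2-q$. As a consistency check, $2(m^2-q)=3\binom q2$ is equivalent to $4m^2=q(3q+1)$.

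I expect the only delicate step to be choosing the weights $4,2$ so that both minimal profiles cost exactly $4$ and the global bound is exactly $q(3q+1)$. With those weights, the residue case check above completes the argument.
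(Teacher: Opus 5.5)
Your proof is correct and is essentially the paper's argument: your excess $4s_i+2p_i-4$ is exactly twice the paper's $e_i=2s_i+p_i-2$. The paper replaces your case check with the one-line congruence that $s_i+2p_i\equiv1$ forces $2s_i+p_i\equiv2\pmod 3$, so $3\mid e_i$, and it handles $\Delta=0$ exactly as you do, via equality in $2S+P\le q(3q+1)/2$.
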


\begin{proof}
For an old color $i$, let $e_i=2s_i+p_i-2$.  Since $s_i+2p_i\equiv1\pmod3$, we have $2s_i+p_i\equiv2\pmod3$, and hence $3\mid e_i$.  Also $e_i\geq0$: this is immediate if $s_i\geq1$, while if $s_i=0$, then $p_i\equiv2\pmod3$ and so $p_i\geq2$.  Equality holds precisely when $(s_i,p_i)=(1,0)$ or $(0,2)$.

Let $S=\sum_i s_i$ and $P=\sum_i p_i$, where the sums are over the old colors.  There are only $q$ singleton edges and $3\binom q2$ pair edges on the new set, so
\[
 2S+P\leq2q+3\binom q2=\frac{q(3q+1)}2.
\]
Therefore $\sum_i e_i=2S+P-2m^2\leq\Delta/2$.  Every positive $e_i$ is at least $3$, and hence at most $\lfloor\Delta/6\rfloor$ old colors have $e_i>0$.

If $\Delta=0$, every $e_i$ is zero and equality holds in the preceding inequality.  Thus all $q$ singleton edges and all $3\binom q2$ pair edges on the new set are used by old colors.  A color of the first profile uses one singleton and no pair edge, so exactly $q$ old colors have that profile; all remaining old colors have the second profile.
\end{proof}

The equality case occurs infinitely often.  We use the following standard fact about Pell equations.  Let $D>0$ be a nonsquare integer, and let $(x_1,y_1)$ be the positive integer solution of
\[
 x^2-Dy^2=1
\]
for which $x_1+y_1\sqrt D$ is smallest.  Then the positive integer solutions are precisely the pairs $(x_k,y_k)$ determined by
\[
 x_k+y_k\sqrt D=(x_1+y_1\sqrt D)^k,\qquad k\geq1;
\]
see, for example,~\cite{NivenZuckermanMontgomery}.

\begin{corollary}\label{cor:pell-boundary}
There are infinitely many admissible pairs $(m,n)$ for which $q=n-m\equiv1\pmod3$ and
\[
 q(3q+1)=4m^2.
\]
The first nontrivial pair is $(m,q,n)=(14,16,30)$.
\end{corollary}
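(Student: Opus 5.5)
Complete the square in $q(3q+1)=4m^2$ to get a Pell-type equation. Multiplying by $12$ gives $36q^2+12q=48m^2$, that is,
\[
 (6q+1)^2-48m^2=1 .
\]
With $x=6q+1$ and $y=m$, we need solutions of $x^2-48y^2=1$ satisfying $x\equiv1\pmod6$. We then need the resulting $q=(x-1)/6$ to satisfy $q\equiv1\pmod3$, and $m$ and $n=m+q$ to be admissible. Since $48$ is not a square, the quoted Pell fact applies. The fundamental solution is $(x_1,y_1)=(7,1)$, because $49-48=1$, and all positive solutions are given by $x_k+y_k\sqrt{48}=(7+\sqrt{48})^k$.

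Concretely, both sequences satisfy the recursion $z_{k+1}=14z_k-z_{k-1}$, with $(x_0,y_0)=(1,0)$ and $(x_1,y_1)=(7,1)$. This gives $(x_2,y_2)=(97,14)$, $(x_3,y_3)=(1351,195)$, $(x_4,y_4)=(18817,2716)$, and so on. Note that $k=1$ yields $q=1$, $m=1$, the trivial case excluded by the phrase ``nontrivial'', and $k=2$ yields $q=16$, $m=14$, $n=30$. We check that $16\equiv1\pmod3$, that $14\equiv2$ and $30\equiv0\pmod3$ are admissible, and that $16\cdot49=784=4\cdot196$.

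For infinitely many admissible pairs, reduce the recursion modulo a suitable number and find a periodic subsequence with the right residues. Modulo $18$ we want $x_k\equiv7$ or $13$ so that $q=(x-1)/6$ is $\equiv1\pmod3$. Since $x\equiv1\pmod6$ then holds automatically, this means $x_k\equiv7\pmod{18}$ and $q\equiv1\pmod3$. We also need $m\not\equiv1\pmod3$ and $n=m+q\not\equiv1\pmod3$, i.e. $m\equiv0$ or $2$ and $m+1\equiv0$ or $2$, which forces $m\equiv2\pmod3$. The recursion modulo $18$ for $x$ and modulo $3$ for $y$ is $z_{k+1}\equiv14z_k-z_{k-1}$, and modulo $3$ it reads $z_{k+1}\equiv2z_k-z_{k-1}$. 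Then $y_k\equiv k\pmod3$ (the sequence $0,1,2,0,\dots$ is arithmetic mod $3$), so $m\equiv2$ exactly when $k\equiv2\pmod3$.

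For $x$ modulo $18$, the values run $1,7,97\equiv7,1351\equiv1,18817\equiv7$ (check: $18\cdot1045=18810$), and then continue with period $3$: $1,7,7,1,7,7,\dots$. Thus for every $k\equiv2\pmod3$ we get $x_k\equiv7\pmod{18}$, hence $q\equiv1\pmod3$ and $m\equiv2\pmod3$. Since $m$ and $n$ are then admissible, and $y_k$ is strictly increasing, this gives infinitely many distinct pairs. The main care point is the periodicity of $x_k$ modulo $18$. I would establish it by verifying that the pair $(x_k,x_{k+1})$ returns to $(1,7)$ after three steps, which is immediate since the recursion is second order. The smallest case $k=2$ gives the stated first nontrivial pair $(14,30)$; one also checks that $k=1$ gives only $m=1$ and that $k$ with $m\not\equiv2\pmod3$ fails admissibility or the congruence.
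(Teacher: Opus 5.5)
Your proof is correct and is essentially the paper's argument: you use the same Pell reduction $(6q+1)^2-48m^2=1$ with fundamental solution $(7,1)$, find a period-$3$ residue pattern, and choose $k\equiv2\pmod3$. The only difference is that you track $x_k\bmod18$ and $y_k\bmod3$ through the scalar recurrence $z_{k+1}=14z_k-z_{k-1}$, whereas the paper reduces the coupled recurrence $q_{k+1}=7q_k+8m_k+1$, $m_{k+1}=6q_k+7m_k+1$ modulo $3$. One small slip: $x\equiv13\pmod{18}$ would give $q\equiv2\pmod3$, so the right condition is only $x_k\equiv7\pmod{18}$, which is in fact the one you use.
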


\begin{proof}
The equality $q(3q+1)=4m^2$ is equivalent to
\[
 (6q+1)^2-48m^2=1.
\]
Here $D=48$, and the smallest positive solution is $(7,1)$, so
\[
 6q_k+1+m_k\sqrt{48}=(7+\sqrt{48})^k,\qquad k\geq1.
\]
Multiplying by $7+\sqrt{48}$ gives
\[
 q_{k+1}=7q_k+8m_k+1,
 \qquad
 m_{k+1}=6q_k+7m_k+1.
\]
Therefore, modulo $3$,
\[
 (q_{k+1},m_{k+1})
 \equiv(q_k+2m_k+1,m_k+1).
\]
Since $(q_1,m_1)=(1,1)$, the residue pairs cycle as
\[
 (1,1)\longrightarrow(1,2)\longrightarrow(0,0)
 \longrightarrow(1,1).
\]
Hence every $k\equiv2\pmod3$ gives $q_k\equiv1\pmod3$ and $m_k\equiv2\pmod3$.  Then $n_k=m_k+q_k\equiv0\pmod3$, so both $m_k$ and $n_k$ are admissible.  There are infinitely many such $k$.  Finally, $k=1$ gives the trivial pair $(m,q,n)=(1,1,2)$, while $k=2$ gives $(m,q,n)=(14,16,30)$.
\end{proof}

\medskip
There is another equality case worth isolating.  The counting argument used for necessity gives the following identity for every embedding, and when $q=m$ it forces a second strong restriction.

\begin{lemma}\label{lem:conservation}
Suppose a one-factorization of $\cc H_m$ is embedded in one of $\cc H_n$, and put $q=n-m$.  For the old colors, let $S=\sum_i s_i$, $P=\sum_i p_i$, and $T=\sum_i t_i$, where $s_i,p_i,t_i$ are the numbers of singleton, pair, and triple edges added on the new $q$-set.  Put $D_1=q-S$, $D_2=3\binom q2-P$, and $D_3=2\binom q3-T$.  Then $D_1,D_2,D_3\geq0$ and
\[
 D_1+2D_2+3D_3=q(q-m)(q+m).
\]
\end{lemma}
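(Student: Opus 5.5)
Nonnegativity of $D_1,D_2,D_3$ is immediate: the old colors, restricted to edges lying entirely in the new $q$-set, use distinct edges of the copy of $K_q^1\cup3K_q^2\cup2K_q^3$ on that set. That sub-hypergraph has exactly $q$ singleton edges, $3\binom q2$ pair edges and $2\binom q3$ triple edges. Hence $S\le q$, $P\le3\binom q2$ and $T\le2\binom q3$.

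For the identity I will count $\beta$-incidences, exactly as in the proof of Lemma~\ref{lem:necessity}. As observed there, every old color already covers every old vertex, so each added edge of an old color lies inside the new set. Each old color therefore covers the $q$ new vertices with its singleton, pair and triple edges there, giving $s_i+2p_i+3t_i=q$ for every old color $i$. Summing over the $m^2$ old colors gives
\[
 S+2P+3T=m^2q.
\]
On the other hand, the total number of incidences of new vertices with edges lying inside the new set is
\[
 q+2\cdot3\binom q2+3\cdot2\binom q3=q^3,
\]
which is the same computation used in Lemma~\ref{lem:necessity}.

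Subtracting the first count from the second gives
\[
 D_1+2D_2+3D_3=q^3-m^2q=q(q-m)(q+m),
\]
which is the claimed identity. There is no real obstacle here. The only step that needs care is the observation that old colors contain no mixed edges, because their old vertices are already saturated by the embedded factorization. That point was established earlier, in the proof of Lemma~\ref{lem:necessity}.
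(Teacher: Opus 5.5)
Your proof is correct and is essentially the paper's own argument. You get nonnegativity from the supply of edges on the new set, sum $s_i+2p_i+3t_i=q$ over the $m^2$ old colors, and subtract from the total incidence count $q+6\binom q2+6\binom q3=q^3$.
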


\begin{proof}
The quantities $D_1,D_2,D_3$ are nonnegative because the old colors cannot use more edges of any type than are available.  Summing $s_i+2p_i+3t_i=q$ over the $m^2$ old colors gives $S+2P+3T=m^2q$.  Since $q+6\binom q2+6\binom q3=q^3$, we obtain $D_1+2D_2+3D_3=q^3-m^2q=q(q-m)(q+m)$.
\end{proof}

\begin{corollary}\label{cor:linear-boundary-rigidity}
Under the hypotheses of Lemma~\ref{lem:conservation}, if $q=m$, then the old colors use every singleton, pair, and triple edge whose vertices lie in the new vertex set.  Consequently no new color uses an edge whose vertices all lie in the new set, and every new color $i$ satisfies $(\alpha^2\beta)_i=(\alpha\beta^2)_i$.
\end{corollary}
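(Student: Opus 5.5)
Setting $q=m$ in Lemma~\ref{lem:conservation} makes the right-hand side $q(q-m)(q+m)$ vanish, so $D_1+2D_2+3D_3=0$. The same lemma gives $D_1,D_2,D_3\geq0$, hence $D_1=D_2=D_3=0$. Thus $S=q$, $P=3\binom q2$ and $T=2\binom q3$. In words, every singleton, pair and triple edge lying inside the new $q$-set receives an old color. This is the first assertion.

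For the second assertion, every edge of $\cc H_n$ carries exactly one color. Any edge whose vertices all lie in the new set is one of the edges just counted and is therefore old-colored. So no new color contains such an edge. In the two-vertex reduction this says $(\beta)_i=(\beta^2)_i=(\beta^3)_i=0$ for every new color $i$.

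Now fix a new color $i$. After contracting the old vertices to $\alpha$ and the new vertices to $\beta$, its edges have types $\alpha\beta$, $\alpha^2\beta$ and $\alpha\beta^2$ only. The reduction requires $\dg_i(\alpha)=m$ and $\dg_i(\beta)=q$, which reads
\[
 (\alpha\beta)_i+2(\alpha^2\beta)_i+(\alpha\beta^2)_i=m,\qquad
 (\alpha\beta)_i+(\alpha^2\beta)_i+2(\alpha\beta^2)_i=q.
\]
Subtracting and using $q=m$ gives $(\alpha^2\beta)_i=(\alpha\beta^2)_i$.

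There is no real obstacle here. The one point needing care is to justify the degree equations for new colors directly from the embedding: a new color is a one-factor of $\cc H_n$ covering each of the $m$ old and $q$ new vertices exactly once, and none of its edges lies entirely within the old set, since those edges all belong to the embedded copy of $\cc H_m$ and carry old colors.
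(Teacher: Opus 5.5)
Your proposal is correct and follows the paper's proof essentially line for line. You set $q=m$ in Lemma~\ref{lem:conservation} to force $D_1=D_2=D_3=0$, conclude that no new color has a $\beta$-, $\beta^2$- or $\beta^3$-edge, and subtract the two degree equations $\dg_i(\alpha)=m$ and $\dg_i(\beta)=q$. Your added remark justifying those degree equations directly from the embedding is a harmless extra.
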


\begin{proof}
When $q=m$, Lemma~\ref{lem:conservation} gives
\[
 D_1+2D_2+3D_3=0.
\]
Since $D_1,D_2,D_3\geq0$, all three vanish.  Thus every edge whose vertices all lie in the new set is already used by an old color.  For a new color $i$, the required degrees at $\alpha$ and $\beta$ are both $m=q$.  Since $(\beta)_i=(\beta^2)_i=(\beta^3)_i=0$, the two degree equations are $(\alpha\beta)_i+2(\alpha^2\beta)_i+(\alpha\beta^2)_i=m$ and $(\alpha\beta)_i+(\alpha^2\beta)_i+2(\alpha\beta^2)_i=m$.  Subtracting gives $(\alpha^2\beta)_i=(\alpha\beta^2)_i$.
\end{proof}

\section{Smallest hosts and infinite extensions}

For an admissible $m$, let $h(m)$ be the least admissible integer $n>m$ such that every symmetric layer-rainbow Latin cube of order $m$ embeds in one of order $n$.

\begin{corollary}\label{cor:smallest-host}
For every admissible $m$,
\[
 h(m)=
 \begin{cases}
 2,&m=1,\\
 6,&m=2,\\
 2m,&m\equiv0\pmod3,\ m\geq6,\\
 2m+1,&m\equiv2\pmod3,\ m\geq5.
 \end{cases}
\]
\end{corollary}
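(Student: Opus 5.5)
The plan is to apply Theorem~\ref{thm:symmetric-cube-embedding} directly. By that theorem, every symmetric layer-rainbow Latin cube of order $m$ embeds in one of order $n$ exactly when $(m,n)$ is feasible, and the condition does not depend on which cube of order $m$ is given. So $h(m)$ is the least admissible $n>m$ for which $(m,n)$ is feasible, and the corollary becomes a finite check of the feasibility conditions in each residue class. Write $q=n-m$.

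\emph{Small cases.} For $m=1$, the pair $(1,2)$ has $q=1\equiv1\pmod3$ and $4\cdot1\le1\cdot4$, and $2$ is admissible, so $h(1)=2$.

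For $m=2$, the admissible $n>2$ are $5,6,\dots$ (recall that $3$ is excluded). The pair $(2,5)$ is excluded explicitly, and $(2,6)$ has $q=4\equiv1\pmod3$ with $16\le4\cdot13$, so $h(2)=6$.

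\emph{Case $m\equiv0\pmod3$, $m\ge6$.} Admissibility of $n$ forces $q\equiv0$ or $2\pmod3$, so $q\not\equiv1$ and feasibility reduces to $q\ge m$. Every admissible $n$ with $m<n<2m$ therefore fails. The value $n=2m$ is admissible and satisfies $q=m$, so $h(m)=2m$.

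\emph{Case $m\equiv2\pmod3$, $m\ge5$.} Admissible $n$ satisfy $n\equiv0$ or $2\pmod3$, so $q\equiv1$ or $0\pmod3$ respectively.

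First suppose $q\equiv0\pmod3$. Then feasibility needs $q\ge m$, and since $m\equiv2$ this means $q\ge m+1$, giving $n\ge2m+1$.

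Next suppose $q\equiv1\pmod3$. Then we need $q(3q+1)\ge4m^2$. The preamble already notes that this forces $q\ge m$. The value $q=m$ is impossible because $m\equiv2\not\equiv1$, so $q\ge m+2$ and $n\ge2m+2$.

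Hence no admissible $n\le2m$ works. Note that $n=2m$ itself is not admissible here, since $2m\equiv1\pmod3$.

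The value $n=2m+1$ satisfies $2m+1\equiv2\pmod3$, so it is admissible, and $q=m+1\equiv0\pmod3$ with $q\ge m$. The exceptional pair $(2,5)$ cannot arise because $m\ge5$. So $h(m)=2m+1$.

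There is no real obstacle here; the only care needed is tracking residues and using the inequality $q\ge m$ already derived in the $q\equiv1\pmod3$ case.
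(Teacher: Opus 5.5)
Your proposal is correct and follows essentially the same route as the paper: you use Theorem~\ref{thm:symmetric-cube-embedding} to reduce $h(m)$ to the least admissible $n>m$ with $(m,n)$ feasible, invoke the fact that feasibility forces $q\ge m$, and then check the residues of $2m$ and $2m+1$. Your residue-by-residue analysis of $q$ is a more explicit version of the paper's observation that, for $m\ge5$, $h(m)$ is the first admissible order at least $2m$.
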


\begin{proof}
The cases $m=1,2$ follow directly from Theorem~\ref{thm:symmetric-cube-embedding}, including the exceptional pair $(2,5)$.  Suppose $m\geq5$.  As observed above, feasibility always implies $q=n-m\geq m$, so every containing cube has order $n\geq2m$.  Thus $h(m)$ is the first admissible order at least $2m$.

If $m\equiv0\pmod3$, the order $2m$ is admissible and has $q=m$, so it is feasible.  If $m\equiv2\pmod3$, the order $2m$ is not admissible, while $2m+1$ is admissible and has $q=m+1\equiv0\pmod3$, so it is feasible.
\end{proof}

\begin{remark}
Iterating the corollary gives explicit chains of embeddings.  If $m\equiv0\pmod3$ and $m\geq6$, every cube of order $m$ extends successively through $m,2m,4m,\ldots$.  If $m\equiv2\pmod3$ and $m\geq5$, it extends through $m,2m+1,4m+3,8m+7,\ldots$, with $n_k=2^k(m+1)-1$.  The small orders begin $1\to2\to6\to12\to\cdots$ and $2\to6\to12\to\cdots$.  Taking the union along such a nested chain gives an array on a countably infinite coordinate set and a countably infinite symbol set in which every coordinate layer contains every symbol exactly once.  Thus every finite symmetric layer-rainbow Latin cube is contained in such a countably infinite symmetric layer-rainbow Latin cube.
\end{remark}

\bibliographystyle{amsplain}
\bibliography{symbib_embedding}

\end{document}